\newtheorem*{maintheorem*}{Main Theorem}
\newtheorem{theorem}{Theorem}[section]
\newtheorem{prop}[theorem]{Proposition}
\newtheorem{lemma}[theorem]{Lemma}
\theoremstyle{definition}
\newtheorem{example}[theorem]{Example}
\numberwithin{equation}{section}
\newcommand{\nn}{\mathbb{N}}
\newcommand{\qq}{\mathbb{Q}}
\newcommand{\rr}{\mathbb{R}}
\newcommand{\zz}{\mathbb{Z}}
\newcommand{\conv}{\mathsf{conv}}
\newcommand{\cone}{\mathsf{cone}}
\newcommand{\gp}{\text{gp}}
\newcommand{\norm}[1]{\left\lVert#1\right\rVert}
\newcommand{\ppp}{\mathsf{p}}
\newcommand{\uu}{\mathcal{U}}
\providecommand\ldb{\llbracket}
\providecommand\rdb{\rrbracket}
\keywords{Furstenberg monoid, atomicity, almost atomic monoid, quasi-atomic monoid, lattice monoid, factorization theory}
\subjclass[2010]{Primary: 13F15, 13A05; Secondary: 20M13, 13F05}
\begin{document}
	
\mbox{}
\title{Subatomicity in Rank-$2$ Lattice Monoids}

\author{Caroline Liu}
\address{Department of Mathematics\\MIT\\Cambridge, MA 02139, USA}
\email{caroliu@mit.edu}

\author{Pedro Rodriguez}
\address{SMSS\\Clemson University\\Clemson, SC 29634, USA}
\email{pedror@clemson.edu}

\author{Marcos Tirador}
\address{Matcom, Universidad de La Habana, Plaza, Habana 10400, Cuba}
\email{marcosmath44@gmail.com}

%

\date{\today}

\begin{abstract}
	Let $M$ be a cancellative and commutative monoid (written additively). The monoid $M$ is atomic if every non-invertible element can be written as a sum of irreducible elements (often called atoms in the literature). Weaker versions of atomicity have been recently introduced and investigated, including the properties of being nearly atomic, almost atomic, quasi-atomic, and Furstenberg. In this paper, we investigate the atomic structure of lattice monoids, (i.e., submonoids of a finite-rank free abelian group), putting special emphasis on the four mentioned atomic properties.
\end{abstract}
\medskip

\maketitle


\bigskip
\section{Introduction}
\label{sec:intro}
\smallskip

Let $M$ be a cancellative and commutative monoid written additively. A non-invertible element $a$ in $M$ is an atom provided that whenever $a = u + v$ either $u$ or $v$ is an invertible element of $M$. A non-invertible element of $M$ is atomic if it can be written as a sum of atoms. Clearly, every atom is an atomic element. The monoid $M$ is atomic if every non-invertible element is atomic. Motivated by the landmark paper \cite{AAZ90} by D. D. Anderson, D. F. Anderson, and M. Zafrullah, the notion of atomicity has been systematically studied for the last three decades, with special focus on Krull monoids and the multiplicative monoids of integral domains.
\smallskip

Weaker versions of atomicity have been recently introduced and studied. In 2015, J. Boynton and J. Coykendall \cite{BC15} made known the notions of almost atomicity and quasi-atomicity. Following their paper, we say that $M$ is quasi-atomic (resp., almost atomic) if for each $b \in M$ there exists an element (resp., an atomic element) $m \in M$ such that the element $b+m$ is atomic. Observe that the properties of being quasi-atomic and being almost atomic are weaker versions of that of being atomic. Honoring the work of H. Furstenberg~\cite{hF55}, P. L. Clark in~\cite{pC17} presented the property of being Furstenberg in the context of integral domains. Following~\cite{pC17}, we say that $M$ is a Furstenberg monoid if every non-invertible element of $M$ is divisible by an atom. It is clear that the property of being Furstenberg is also a weaker version of that of being atomic. Most recently, in 2019, N. Lebowitz-Lockard \cite{nL19} studied all the weaker versions of atomicity mentioned in this paragraph, along with that of $M$ being nearly atomic, namely, the existence of $b \in M$ such that $b+m$ is atomic for all $m \in M$.
\smallskip

In the recent papers \cite{fG20a} and \cite{fG20}, F. Gotti studied certain arithmetic and factorization aspects of submonoids of finite-rank free abelian groups, which we call here lattice submonoids as they can be realized by monoids consisting of lattice points in $\zz^n$ for some $n \in \nn$. Motivated by the Frobenius coin problem, the rank-one lattice monoids have been largely considered in the literature under the name of numerical monoids. Numerical monoids are always atomic. However, this is not the case for lattice monoids with rank $2$ or higher. The primary purpose of this paper is to provide evidence of the complexity of the subatomic structure of lattice monoids of rank higher than $1$.
\smallskip

In Section~\ref{sec:background}, we introduce the terminology, notation, and main well-established results we will be using later. 
\smallskip

In Section~\ref{sec:overview}, after providing some initial examples illustrating how the size of the sets of atoms of additive submonoids of $\zz^n$ (specially $\zz^2$) can vary, we provide a sufficient condition for atomicity in the class consisting of all additive submonoids of $\zz^2$.  Examples illustrating different aspects of the atomicity and factorization of rank-$2$ submonoids have also been given in~\cite[Section~3]{fG20a} and more recently in~\cite[Sections~4 and~5]{GV23}. In addition, the set of atoms of the root-closed submonoids of $\zz^2$ has been fully described in~\cite{gL22}. Not long ago, an example of an atomic submonoid of $\zz^2$ that does not satisfy the ACCP was constructed in~\cite[Section~3]{fG23}.
\smallskip

In Section~\ref{sec:subatomicity}, we investigate the properties of being nearly atomic, almost atomic, and quasi-atomic. These three properties have been considered in~\cite{nL19} for the class of integral domains, in~\cite{GP23} for the class of semidomains (subsemirings of integral domains), and lately in~\cite{GV23} for the class of positive monoids (i.e., submonoids of a totally ordered group contained in its nonnegative cone). First, we quickly argue that every nearly atomic monoid is almost atomic (this follows as in the special case of integral domains, which was first argued in~\cite{nL19}). We also prove that in the class of rationally supported additive submonoids of $\zz^2$, the condition of being atomic and being nearly atomic are equivalent. Then, following a technique introduced in the proof of~\cite[Proposition~3.6]{fG23}, we construct a nearly atomic additive submonoid of $\zz^2$ that is not atomic. After that, we exhibit examples to confirm that near atomicity, almost atomicity, and quasi-atomicity are non-equivalent properties in the class consisting of all lattice monoids of $\zz^2$. Finally, we present a characterization of quasi-atomic monoids, which generalizes the results established in \cite[Theorem $5.2$]{GP23} and \cite[Theorem $8$]{nL19}.

\smallskip

Finally, in Section~\ref{sec:Furstenbergness}, we study the property of being Furstenberg. Besides being first investigated in the context of integral domains in~\cite{nL19}, the property of being Furstenberg has been considered in recent years in \cite[Section~5]{GL22} and \cite[Section~4]{GZ23}, also in the context of integral domains. The same property was most recently considered in~\cite[Section~3]{GP23} in the context of semidomains. Here we focus on the context of lattice monoids of $\zz^2$. First, we prove that for each $k \in \nn \cup \{\infty\}$ there exists a non-atomic Furstenberg monoid having precisely $k$ atoms. Then we show that every rationally supported lattice monoid of $\zz^2$ is a Furstenberg monoid.

\bigskip
\section{Background}
\label{sec:background}

In this section we introduce most of the relevant concepts related to commutative semigroups and convex geometry required to follow the results presented later. General references for any undefined term or notation can be found in~\cite{pG01} for commutative semigroups, in~\cite{GH06} for atomic monoids, and in~\cite{rtR70} for convex geometry. Also, see~\cite{GZ20} for a recent survey on factorizations in commutative monoids.

\medskip
\subsection{General Notation}

We set $\nn := \{1,2,\dots\}$ and $\nn_0 := \nn \cup \{0\}$. If $x,y \in \zz$, then we let $\ldb x,y \rdb$ denote the interval of integers between $x$ and $y$, i.e.,
\[
	\ldb x,y \rdb := \{z \in \zz \mid x \le z \le y\}.
\]
Clearly, $\ldb x,y \rdb$ is empty when $x > y$. In addition, for $X \subseteq \rr$ and $r \in \rr$, we set $X_{\ge r} := \{x \in X \mid x \ge r\}$, and we use the notation $X_{> r}$ in a similar way. Lastly, if $Y \subseteq \rr^d$ for some $d \in \nn $, then we set $Y^\bullet := Y \setminus \{0\}$.

\medskip
\subsection{Commutative Monoids}

Throughout this manuscript, the term \emph{monoid} refers to a cancellative, commutative semigroup with identity. Let $M$ be a monoid written additively. We set $M^\bullet := M \setminus \{0\}$, and we say that $M$ is \emph{trivial} if $M^\bullet = \emptyset$. The invertible elements of $M$ form a group, which we denote by $\uu(M)$, and $M$ is called \emph{reduced} if $\uu(M)$ is the trivial group. The \emph{difference group} $\gp(M)$ of $M$ is the unique abelian group $\gp(M)$ up to isomorphism satisfying that any abelian group containing an isomorphic image of $M$ also contains an isomorphic image of $\gp(M)$. The \emph{rank} of $M$ is taken to be the rank of $\gp(M)$ as a $\zz$-module, that is, the dimension of the vector space $\qq \otimes_\zz \gp(M)$. The monoid $M$ is \emph{torsion-free} if $nx = ny$ for some $n \in \nn$ and $x,y \in M$ implies that $x = y$. A monoid is torsion-free if and only if its difference group is torsion-free (see \cite[Section~2.A]{BG09}). The \emph{reduced monoid} of $M$ is the quotient of $M$ by $\uu(M)$, which is denoted by $M_{\text{red}}$. 
\smallskip

For $r,s \in M$, we say that $s$ \emph{divides} $r$ \emph{in} $M$ if there is a $t \in M$ such that $r = s + t$; in this case, we write $s \mid_M r$. A submonoid $M'$ of $M$ is called a \emph{divisor-closed submonoid} if every element of~$M$ dividing an element of~$M'$ in $M$ must belong to $M'$. If $S$ is a subset of $M$, then we let $\langle S \rangle$ denote the smallest submonoid of $M$ containing $S$, in which case, we say that $S$ is a \emph{generating set} of $\langle S \rangle$. The monoid $M$ is called \emph{finitely generated} provided that $M = \langle S \rangle$ for some finite subset $S$ of $M$.
\smallskip

An element $a \in M \setminus \uu(M)$ is called an \emph{atom} if whenever $a = r + s$ for some $r,s \in M$ either $r \in \uu(M)$ or $s \in \uu(M)$. We let $\mathcal{A}(M)$ denote the set consisting of all the atoms of $M$. Note that if~$M$ is reduced, then $\mathcal{A}(M)$ is contained in every generating set of~$M$.  A non-invertible element $b \in M$ is called \emph{atomic} (resp., \emph{almost atomic}, \emph{quasi-atomic}) provided that $b \in \langle \mathcal{A}(M) \rangle$ (resp., $b \in \gp(\langle \mathcal{A}(M) \rangle)$, $b \in \langle \mathcal{A}(M) \rangle - M := \{a - m \mid a \in \langle \mathcal{A}(M) \rangle \text{ and } m \in M\}$). Following~\cite{pC68}, we say that $M$ is \emph{atomic} if each non-invertible element of $M$ is atomic and, following~\cite{BC15}, we say that $M$ is \emph{almost atomic} (resp., \emph{quasi-atomic}) if each non-invertible element of $M$ is almost atomic (resp., quasi-atomic). It follows directly from the definitions that every atomic monoid is almost atomic and also that every almost atomic monoid is quasi-atomic. Following~\cite{nL19}, we say that $M$ is \emph{nearly atomic} if there exists $b \in M$ such that $b + m$ is atomic for all $m \in M$. It is clear that every atomic monoid is nearly atomic. In addition, every nearly atomic monoid is almost atomic: this was proved in~\cite{nL19} for the special case of integral domains, and we will see in Proposition~\ref{prop:NA implies AA} that it is also true in the more general case of monoids.

Following~\cite{pC17}, we say that the monoid $M$ is a \emph{Furstenberg monoid} if every element of $M \setminus \uu(M)$ is divisible by an atom in $M$. It follows from the definitions that every atomic monoid is Furstenberg. The converse does not hold in general: for instance, the monoid constructed in~\cite[Example~4.11]{GV23} is Furstenberg but not atomic (integral domains that are Furstenberg but not atomic also exist~\cite[Lemma~16]{nL19}). In addition, non-Furstenberg monoids with any prescribed number of atoms have been exhibited in~\cite{GZ23} (integral domains with nonempty set of atoms that are not Furstenberg also exist: see~\cite[Lemma~17]{nL19} for a stronger result and see~\cite[Section~5]{CG19} in tandem with \cite[Example~4.10]{GZ23} for the construction of a non-Furstenberg monoid domains with infinitely many atoms).

A subset $I$ of $M$ is called an \emph{ideal} of~$M$ if the set $I + M := \{r + s \mid r \in I \text{ and } s \in M\}$ is contained in~$I$ or, equivalently, if $I + M = I$. We say that an ideal $I$ of $M$ is proper if $I \neq M$. In addition, a proper ideal $P$ of $M$ is called \emph{prime} if whenever $r + s \in P$ for some $r, s \in M$, then either $r \in P$ or $s \in P$.

\medskip
\subsection{Factorizations}

A multiplicative monoid $F$ is said to be \emph{free on} a subset $A$ of $F$ provided that each element $x \in F$ can be written uniquely in the form
\[
	x = \prod_{a \in A} a^{\mathsf{v}_a(x)},
\]
where $\mathsf{v}_a(x) \in \nn_0$ and $\mathsf{v}_a(x) > 0$ only for finitely many $a \in A$. It is well known that for each set $A$, there exists a unique (up to isomorphism) monoid $F$ such that $F$ is a free monoid on $A$. For the monoid $M$, the free monoid on $\mathcal{A}(M_{\text{red}})$, denoted by $\mathsf{Z}(M)$, is called the \emph{factorization monoid} of $M$, and the elements of $\mathsf{Z}(M)$ are called \emph{factorizations}. If $z = a_1 \cdots a_n$ is a factorization in $\mathsf{Z}(M)$ for some $n \in \nn_0$ and $a_1, \dots, a_n \in \mathcal{A}(M_{\text{red}})$, then $n$ is called the \emph{length} of $z$ and is denoted by $|z|$. In addition, the unique monoid homomorphism $\phi \colon \mathsf{Z}(M) \to M_{\text{red}}$ satisfying that $\phi(a) = a$ for all $a \in \mathcal{A}(M_{\text{red}})$ is called the \emph{factorization homomorphism} of $M$. For each $x \in M$ the set
\[
	\mathsf{Z}(x) := \mathsf{Z}_M(x) := \phi^{-1}(x) \subseteq \mathsf{Z}(M)
\]
is called the \emph{set of factorizations} of $x$. Observe that $M$ is atomic if and only if $\mathsf{Z}(x)$ is nonempty for all $x \in M$ (notice that $\mathsf{Z}(0) = \{\emptyset\}$). 
For each $x \in M$, the \emph{set of lengths} of $x$ is defined by
\[
	\mathsf{L}(x) := \mathsf{L}_M(x) := \{|z| :  z \in \mathsf{Z}(x)\}.
\]
The sets of lengths of submonoids of $(\nn^d,+)$, where $d \in \nn$, have been considered in~\cite{fG20}. A survey on sets of lengths can be found in~\cite{aG16}. 

\medskip
\subsection{Euclidean Geometry and Convexity} 

Let $u$ be a nonzero vector in $\rr^2$, and let $L$ be a one-dimensional subspace (a line through the origin) of $\rr^2$. Then we let $\ppp_u$ and $\ppp_L$ denote the projection vectors on the one-dimensional subspace generated by $u$ and the one-dimensional subspace $L$, respectively. Also, we denote the upper  (resp., lower) closed half-space determined by $L$ as $L^+$ (resp., $L^{-}$). If $M$ is a submonoid of $\zz^2$, a \emph{supporting line} of $M$ is a line $L_M$ through the origin of $\rr^2$ such that either $M \subseteq L_M^+$ or $M \subseteq L_M^-$.  Additionally, if $M$ has a supporting line with rational slope, then we say that~$M$ is \textit{rationally supported}.  

For the rest of this subsection, fix $d \in \nn$ with $d \ge 2$. For any $x = (x_1, \dots, x_d)$ and $y = (y_1, \dots, y_d)$ in $\rr^d$, we let $\langle x, y \rangle$ denote the standard inner product of $x$ and $y$, that is, $\langle x, y \rangle = \sum_{i=1}^d x_i y_i$. In addition, for each $x \in \rr^d$, we let $\norm{x}$ denote the Euclidean norm of $x$. Also, if $x,y \in \rr^d$ and $Y$ is a nonempty subset of $\rr^d$, then we let $d(x,y)$ denote the Euclidean distance between $x$ and $y$, and we set
\[
	d(x,Y) := \inf \{ \norm{x-y} \, \mid y \in Y\},
\]
which is the Euclidean distance from $x$ to $Y$.

Let $S$ be a nonempty subset of $\rr^d$. The convex hull of $S$ (i.e., the intersection of all convex subsets of $\rr^d$ containing $S$) is denoted by $\conv(S)$. A nonempty convex subset~$C$ of $\rr^d$ is called a \emph{cone} if $C$ is closed under linear combinations with nonnegative coefficients. A cone $C$ is called \emph{pointed} if $C \cap -C = \{0\}$. The \emph{conic hull} of $S$, denoted by $\cone(S)$, is defined as follows:
\[
	\cone(S) := \bigg\{ \sum_{i=1}^n c_i s_i \ \bigg{|} \ n \in \nn, \ \text{and} \ s_i \in S \ \text{and} \ c_i \ge 0 \ \text{for every} \ i \in \ldb 1,n \rdb \bigg\},
\]
i.e., $\cone(S)$ is the smallest cone in $\rr^d$ containing $S$. For $s_1, \dots, s_k \in \rr^d$, we write $\textsf{cone}(s_1, \dots, s_k)$ instead of $\textsf{cone}(\{s_1, \dots, s_k\})$.

\bigskip
\section{Atomicity of Lattice Monoids}
\label{sec:overview}

\medskip
\subsection{Preliminary Examples of Lattice Monoids} 

The central objects of this paper are the lattice monoids. A \emph{lattice monoid} is a submonoid of a finite-rank free abelian group. Lattice monoids can be realized, up to isomorphism, as additive submonoids of $\zz^n$, where $n \in \nn$. The nontrivial lattice monoids of rank $1$ are the additive submonoids of $\zz$, which are either infinite cyclic groups or \emph{numerical monoids}, i.e., additive submonoids of $\nn_0$. Although it is easy to verify that every numerical monoid is finitely generated, this is not the case for lattice monoids of rank greater than $1$, as they are not finitely generated in general. This is illustrated in the following examples.

\begin{example}
	We proceed to present two atomic lattice monoids with infinitely many atoms, one of them having a reduced monoid with infinitely many atoms and one of them having a reduced monoid with exactly one atom.
	
	(1) Observe that the set $M := \{(0,0)\} \cup (\zz \times \nn)$ is closed under addition. Therefore, it is a lattice monoid. It is clear that $M$ is reduced. Figure \ref{fig:Example_infinite_many_atoms}(1) illustrates the elements of $M$. Now note that every lattice point contained in the line $y=1$ is an atom of $M$. In addition, one can readily see that every nonzero element of $M$ is the finite sum of lattice points in the line $y=1$. Hence, $\mathcal{A}(M) = \{(a,1) \mid a \in \zz\}$. Finally, since $M$ is reduced, it is isomorphic to its reduced monoid, and so $M_\text{red}$ also has infinitely many atoms.
	
	\smallskip
	
	(2) Now set $M := \zz \times \nn_0$, and observe that $M$ is a lattice monoid because both $\zz$ and $\nn_0$ are additive monoids. Figure \ref{fig:Example_infinite_many_atoms}(2) illustrates the elements of $M$. We can readily see that $\mathcal{U}(M) = \zz \times \{ 0 \}$. Therefore, for each $a \in \zz$, whenever $(a,1) = v_1 + v_2$ for some $v_1, v_2 \in M$ either $v_1$ or $v_2$ belongs to $\mathcal{U}(M)$. Thus, every element of the form $(a,1)$, where $a \in \zz$, is an atom of $M$, and so each element in $M \setminus \mathcal{U}(M)$ can be written as a sum of atoms. Hence, $M$ is atomic with $\mathcal{A}(M) = \{(a,1) \mid a \in \zz\}$. Since $a_1 + \mathcal{U}(M) = a_2 + \mathcal{U}(M)$ for all $a_1, a_2 \in \mathcal{A}(M)$, it follows that $\mathcal{A}(M_{\text{red}})$ is a singleton. Hence, $M_{\text{red}}$ is an atomic monoid with exactly one atom, from which we can infer that $M_{\text{red}}$ is isomorphic to the additive monoid $\nn_0$.
		\begin{figure}[h]
		\includegraphics[width=16cm]{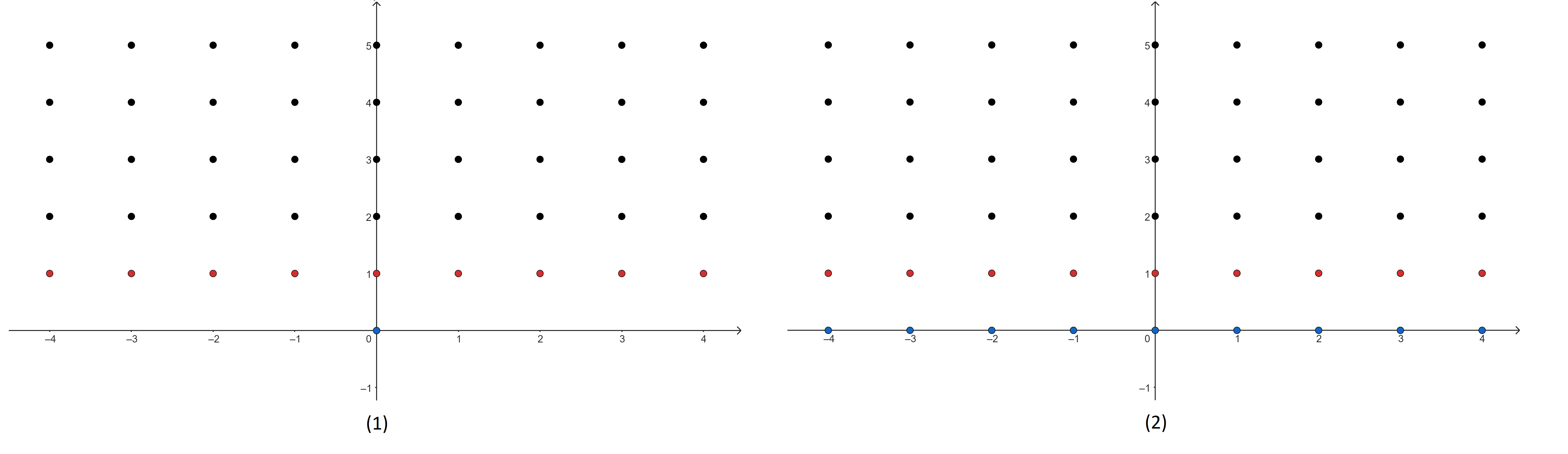}
		\caption{Elements of the two lattice monoids constructed in this example. Atoms are displayed in red and units in blue.}
		\label{fig:Example_infinite_many_atoms}
	\end{figure}
\end{example}

Indeed, there are lattice monoids having infinitely many atoms that are not atomic. The following example sheds some light upon this observation.

\begin{example}
	Consider the lattice submonoid $M' = \langle (0,0,1), (x,1,0) \mid x \in \zz \rangle$ of~$\zz^3$, and then set $M := M' \cup \{(x,y,z) \in \zz^3 \mid y \ge 2\}$. It is clear that $M$ is closed under addition. Therefore, $M$ is a lattice monoid. Since the intersection of $M$ with the plane determined by $y=0$ is the reduced monoid $M_0 := \{(0,0,z) \mid z \in \nn_0\}$, we conclude that $M$ is also a reduced monoid.
	
	We first show that $\mathcal{A}(M) = \{e_3, a_n \mid n \in \zz\}$, where $e_3 = (0,0,1)$ and $a_n = (n, 1,0)$ for every $n \in \zz$. Since $M_0$ is a divisor-closed submonoid of $M$, and $e_3$ is an atom of $M_0$, it follows that $e_3 \in \mathcal{A}(M)$. Now observe that $M$ intersects the plane determined by $y=1$ at the set $\{(x,1,z) \in \zz^3 \mid z \ge 0\}$. As none of the vectors in $M_0^\bullet$ divides any of the vectors in $\{a_n \mid n \in \zz\}$, it follows that $a_n \in \mathcal{A}(M)$ for every $n \in \zz$. Since $e_3$ divides $(x,1,z)$ for any $(x,z) \in \zz \times \nn$, it follows that $\mathcal{A}(M)$ intersects the plane determined by $y=1$ in $\{a_n \mid n \in \zz\}$. Finally, the fact that $e_3$ divides $(x,y,z)$ in $M$ for each $y \ge 2$ guarantees that $\mathcal{A}(M) = \{e_3, a_n \mid n \in \zz\}$, as desired.
	
	It is only left to verify that $M$ is not atomic. Indeed, since $\mathcal{A}(M)$ is contained in the half-space of $\rr^3$ determined by $z \ge 0$, the element $(0,2,-1) \in M$ cannot be written as a sum of atoms in $M$.
\end{example}

\medskip
\subsection{A Sufficient Condition for Atomicity}

There are non-atomic lattice submonoids of $\zz^2$ with infinitely many atoms, and we will construct some of such monoids in Example~\ref{ex:NA no AA}. However, the construction of those monoids requires more subtlety. This is given to the fact that if a non-atomic lattice monoid of rank~2 has a supporting line with rational slope, then it has only finitely many atoms. This is an immediate consequence of Theorem~\ref{thm:Nearly_rationa_finite_atoms}, which is the main result of this section and offers a sufficient condition for the atomicity of a lattice monoid. Before establishing that result, we need the following lemma.

\begin{lemma} \label{lemma1}
	Let $M$ be a lattice monoid of $\zz^2$. If $M$ is rationally supported, then~$M$ is isomorphic to a submonoid of $\mathbb{Z}  \times  \mathbb{N}_0$.
\end{lemma}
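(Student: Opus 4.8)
The plan is to extract from a rational supporting line an integer-valued linear functional that is nonnegative on all of $M$, and then to complete its (primitive) normal vector to a $\zz$-basis of $\zz^2$, thereby producing an automorphism of $\zz^2$ that carries $M$ into $\zz \times \nn_0$. No atomicity or finiteness input about $M$ will be needed; the statement is purely a change-of-coordinates fact.

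First I would record the functional. Since $M$ is rationally supported, it has a supporting line $L$ through the origin of rational slope, and we may assume $M \subseteq L^+$ (otherwise replace $L^+$ by $L^-$ in what follows). A line of rational slope $p/q$ in lowest terms is cut out by the integer equation $p x - q y = 0$, so $L$ admits an integer normal vector; dividing by the greatest common divisor of its coordinates and, if necessary, negating it, we obtain a primitive vector $n = (a,b) \in \zz^2$ with $\gcd(a,b) = 1$ such that the linear functional $\ell(v) := \langle n, v \rangle = a v_1 + b v_2$ satisfies $\ell(m) \ge 0$ for every $m \in M$. (A horizontal supporting line is handled identically with $n = (0,1)$.)

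Next I would build the automorphism. Because $n$ is primitive, B\'ezout's identity furnishes $c,d \in \zz$ with $ad - bc = 1$, so the matrix $U = \left(\begin{smallmatrix} c & d \\ a & b \end{smallmatrix}\right)$ has $\det U = cb - ad = -1$ and hence lies in $GL_2(\zz)$, with second row equal to $n$. Define $\varphi \colon \zz^2 \to \zz^2$ by $\varphi(v) = Uv$. Then $\varphi$ is a group automorphism of $\zz^2$, and the second coordinate of $\varphi(v)$ is precisely $\ell(v)$. Restricting $\varphi$ to $M$ yields an injective monoid homomorphism whose image satisfies $\varphi(M) \subseteq \zz \times \nn_0$, since $\ell(m) \ge 0$ for all $m \in M$. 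As $\varphi$ is injective and additive, it is a monoid isomorphism of $M$ onto $\varphi(M)$, exhibiting $M$ as isomorphic to a submonoid of $\zz \times \nn_0$.

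The only points requiring care are the passage from \emph{rational slope} to a \emph{primitive integer} normal vector with the correct sign, and the completion of that primitive vector to a unimodular matrix; both are elementary, the latter being the standard fact that a primitive vector of $\zz^2$ extends to a $\zz$-basis. I do not anticipate any genuine obstacle: once the nonnegative integer functional is in hand, the unimodular change of coordinates does all the work.
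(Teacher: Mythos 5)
Your proposal is correct and follows essentially the same route as the paper: both proofs turn the rational supporting line into an integer linear functional that is nonnegative on $M$ and use it as the second coordinate of an injective integer linear map into $\zz \times \nn_0$. The only (immaterial) difference is that you complete the primitive normal vector to a unimodular matrix via B\'ezout, so injectivity is automatic, whereas the paper uses the map $r \mapsto (\langle (b,a), r\rangle, \langle (-a,b), r\rangle)$, whose determinant is $a^2+b^2$, and checks injectivity by a short direct computation.
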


\begin{proof}
	Let $L$ be a supporting line of $M$ having a rational slope. Assume, without loss of generality, that $M \subset L^+$. Let $a/b$, with $a \in \zz$ and $b \in \nn$, be the slope of $L$. Now consider the function $f \colon M \to \mathbb{Z}  \times  \mathbb{N}_0$ defined as $f(r) = (\langle (b,a) , r \rangle, \langle (-a,b) , r \rangle)$. Observe that for every $r = (r_1, r_2) \in M$, it follows that $r_2 \ge \frac ab r_1$ and so $\langle (-a,b), r \rangle  = b \big( r_2 - \frac ab r_1 \big) \ge 0$. Thus, $f$ is well defined. In addition, the linearity of the maps $r \mapsto \langle (b,a), r \rangle$ and $r \mapsto \langle (-a,b), r \rangle$ ensures that $f$ is indeed a monoid homomorphism. To show that $f$ is injective take $r = (r_1, r_2)$ and $s = (s_1, s_2)$ in $M$ such that $f(r) = f(s)$. Then the equalities
	\[
		br_1 + ar_2 = bs_1 + as_2  \quad \text{ and }  - ar_1 + br_2 = - as_1 + bs_2 
	\]
	hold. This implies that $a(r_2 - s_2) = -b(r_1 - s_1)$ and $a(r_1 - s_1) = b(r_2 - s_2)$. After multiplying these equalities by $b$ and $a$, respectively, we see that
	\[
		a^2(r_1 - s_1)= ab(r_2 - s_2) = -b^2(r_1 - s_1).
	\]
	Therefore, $r_1 = s_1$, which implies that $r_2 = s_2$. Hence, the function $f$ is injective and, as a result, $M \cong f(M) \subseteq \mathbb{Z}  \times  \mathbb{N}_0$.
\end{proof}

In the following theorem we offer a sufficient condition for a lattice monoid to be atomic.

\begin{theorem} \label{thm:Nearly_rationa_finite_atoms}
	Let $M$ be a rationally supported lattice monoid. 
	If $|\mathcal{A}(M)| = \infty$, then $M$ is atomic. 
\end{theorem}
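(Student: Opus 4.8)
The plan is to first invoke Lemma~\ref{lemma1} to replace $M$ by an isomorphic copy inside $\zz \times \nn_0$, so that the rational supporting line becomes the horizontal axis and the second-coordinate projection $h \colon M \to \nn_0$ (the \emph{height}) is a monoid homomorphism with $h^{-1}(0) = M_0 := M \cap (\zz \times \{0\})$. The whole argument is organized around $M_0$, a divisor-closed submonoid isomorphic to a submonoid of $\zz$; write $M_0^{(x)} := \{a \in \zz : (a,0)\in M\}$ for its first-coordinate image. The atoms of $M_0$ are exactly the height-$0$ atoms of $M$, and since submonoids of $\zz$ have finitely many atoms, all but finitely many atoms of $M$ have positive height. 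The guiding principle is that if the divisibility relation of $M$ (or of $M_{\text{red}}$) is well founded, i.e.\ $M$ satisfies the ACCP, then $M$ is atomic; so in each case I will try to produce a height-based descent.

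First I would dispose of the case in which $M_0$ is a group, i.e.\ $M_0 = \uu(M)$ (this includes $M_0 = \{0\}$). Passing to $M_{\text{red}} = M/\uu(M)$, the height descends to a homomorphism $\bar h$, and a nonunit of $M_{\text{red}}$ is exactly a class of positive height; hence multiplying by a nonunit strictly increases $\bar h$, so proper divisors strictly decrease it and $M_{\text{red}}$ satisfies the ACCP. Thus $M_{\text{red}}$, and therefore $M$, is atomic. The two monoids of the first example above live here, which already explains how atomicity can coexist with infinitely many atoms.

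Next suppose $M_0$ is not a group. If $M_0$ contained both $(a,0)$ with $a>0$ and $(-b,0)$ with $b>0$, then it would contain the subgroup $\gcd(a,b)\zz \times \{0\}$ and $(a,0)$ would be a unit; so the nonzero points of $M_0$ all have the same sign, say $M_0^{(x)} \subseteq \nn_0$, and then $M$ is reduced. Let $p$ be the least positive element of $M_0^{(x)}$. Because every fiber $B_k := \{x : (x,k)\in M\}$ satisfies $B_k + p \subseteq B_k$, an atom of height $k$ must be the minimum of its residue class modulo $p$ inside $B_k$, so each height carries at most $p$ atoms. If, in addition, every fiber $B_k$ is bounded below, then along any chain of proper divisors the height is nonincreasing, and at a fixed height the forced axis summand strictly decreases the (bounded-below) first coordinate; such chains terminate, the ACCP holds, and $M$ is atomic.

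The remaining, and genuinely hard, case is $M_0^{(x)} \subseteq \nn_0$ with some fiber $B_{k_0}$ unbounded below, which is exactly the situation $\cone(M) = \rr \times \rr_{\ge 0}$; by the previous paragraphs this is also the only configuration in which a rationally supported monoid can fail to be atomic. Here the naive descent breaks down: an unbounded-below fiber is $+p$-invariant, hence contains an entire bi-infinite residue class modulo $p$, and this produces infinite descending divisor chains that obstruct the ACCP. This is where the hypothesis $|\mathcal{A}(M)| = \infty$ must finally be used: the per-height bound of $p$ atoms forces atoms to appear in infinitely many fibers, hence at arbitrarily large heights, and the plan is to combine this height-spread of atoms with the eventual periodicity of the fibers governed by the numerical monoid $M_0^{(x)}$ (which controls $M$ on the right) to express an arbitrary element as a finite sum of atoms. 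Turning this into a clean factorization, equivalently showing that non-atomicity in this configuration confines the atoms to finitely many fibers, is the main obstacle, and I expect it to be the technical heart of the proof.
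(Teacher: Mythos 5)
Your reduction via Lemma~\ref{lemma1}, your dispatch of the case where $M \cap (\zz \times \{0\})$ is a group (the paper's \emph{Case 1}), and your observation that each fiber carries at most $p$ atoms are all correct, and they rightly isolate the only configuration in which the theorem has content: $M_0^{(x)} \subseteq \nn_0$ with some fiber unbounded below. But that is precisely the case you do not prove --- you describe combining the height-spread of atoms with the periodicity of the fibers as ``the main obstacle'' and ``the technical heart of the proof,'' and stop there. So the proposal has a genuine gap: the one case where the hypothesis $|\mathcal{A}(M)| = \infty$ actually does any work is left open, and your plan for it (directly factoring an arbitrary element) points in a harder direction than necessary, since it must contend with exactly the infinite descending divisor chains you already identified as obstructing the ACCP.

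The missing idea, which is how the paper closes this case, is to argue the contrapositive by extracting structure from a non-atomic element rather than factoring anything. If $M$ is not atomic, one gets a chain $b_0, b_1, \dots$ of pairwise non-associate non-atomic elements with $b_{n+1} \mid_M b_n$; the heights eventually stabilize at some $k_0$, so the differences $b_n - b_{n+1}$ all lie in $M_0$, which has finitely many atoms, and hence some fixed $(d_1,0)$ divides infinitely many of them. Telescoping gives $m(d_1,0) \mid_M b_N$ for every $m$, and B\'ezout upgrades this to $m(d_0,0) \mid_M b_N$ for every $m$, where $(d_0,0)$ is the smallest nonzero element of $M_0$. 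This single ``infinitely divisible'' element is what collapses the atoms \emph{vertically}: if two atoms satisfy $a_1 - a_2 = p' b_N + m'(d_0,0)$ with $p' > 0$, then $a_1 - a_2 = (p'-1)b_N + \big(b_N + m'(d_0,0)\big) \in M$, contradicting that $a_1$ is an atom. Partitioning $\zz \times \nn_0$ into the $k_0 d_0$ classes $(d,k) + \zz(d_0,0) + \nn_0 b_N$ then shows $|\mathcal{A}(M)| \le k_0 d_0$, contradicting $|\mathcal{A}(M)| = \infty$. Your per-height bound of $p$ atoms is only the horizontal half of this count; without the vertical identification supplied by $b_N$ it yields no global bound.
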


\begin{proof}
	In light of Lemma~\ref{lemma1}, we can assume that $M$ is a submonoid of $\zz \times \nn_0$. Now suppose, for the sake of a contradiction, that $M$ is not atomic. Let $b_0$ be a non-atomic element in $M$ that is not invertible. Then there exists a sequence $(b_n)_{n \in \nn_0}$ of non-invertible non-atomic and pairwise non-associate elements of $M$ such that $b_{n+1} \mid_{M} b_n$ for every $n \in \nn_0$. 
	For each $k \in \nn_0$, consider $L_k :=\{ (x,y) \in \zz^2 \mid y=k \}$. Let $u = (0, 1)$. We will split the rest of our proof into the following two cases.
	\smallskip
	
	\textit{Case 1:} $L_0 \cap M=\{(0,0)\}$ or $L_0 \cap M$ contains elements in both sides of the origin. In this case, it can be proved that every element in $L_0 \cap M$ is invertible. Hence, the sequence $(\norm{ \ppp_u(b_n) } )_{ n \in \nn_0}$ is strictly decreasing since its elements are non-associate. However, this sequence contains only finitely many different elements, which yields a contradiction.
	\smallskip
	
	\textit{Case 2:} $L_0 \cap M^\bullet$ contains elements in one and only one side of the origin. Let $(d_0,0)$ be the element with the smallest norm of $L_0 \cap M^\bullet$. Notice that there exist $N \in \nn_0$ and  $k_0 \in \nn_0$ such that for every $n \geq N$ the element $b_n$ belongs to $L_{k_0} \cap M$. 
	
	Now we prove that $m(d_0,0)$ divides the element $b_N$ for all $m \in \nn$. For every $n \geq N$ we have that $c_n := b_n - b_{n+1}$  is an atomic element of $L_0 \cap M$. Observe that $L_0 \cap M$ is a divisor-closed submonoid of $M$ that is isomorphic to a submonoid of $(\nn_0, +)$. Therefore, it has a finite number of atoms. Hence, at least one of them, namely $(d_1,0)$, divides an infinite number of $c_n$. Besides, the element $b_n - b_{n+1}$ divides $b_N$ for every $n \ge N$ since $b_N = b_N - b_{N+1} + b_{N+1} - b_{N+2}\cdots + b_n - b_{n+1} + b_{n+1}$. Then $m(d_1,0)$ divides the element $b_N$ for every $m \in \nn$. If $d_0 = d_1$ we are done. Suppose otherwise that $d_0 \ne d_1$ and let $m_0 = d_0 / \gcd(d_0, d_1)$. From B\'ezout's identity we know that there exist $r, s \in \nn_0$ such that  
	\[
		b_N - m(d_0, 0) = b_N - m m_0(\gcd(d_0,d_1), 0) = b_N - m m_0(r(d_1, 0) - s(d_0, 0)) \in M
	\]
	for every $m \in \nn$, as desired.
	
	For each $k \in \ldb 0, k_0 - 1 \rdb$, set $C_k:= \bigcup_{p \in \nn_0} L_{k+pk_0}$, and let us prove that $C_k$ contains at most finitely many atoms. Notice that $L_{k + pk_0} = \{x + p b_N \mid x \in L_k\}$ for every $p \in \nn_0$. Thus,
	\[
		C_k =  \{ x + p b_N \mid x \in L_k, p \in \nn_0 \} = \bigcup \limits_{d=0}^{d_0 - 1} \big \{ (d, k) + m(d_0,0) + p b_N \mid  m \in \zz, p \in \nn_0 \big\}. 
	\]
	Take $a_1, a_2 \in \{ (d,k) + m(d_0,0) + p b_N \mid  m \in \zz, p \in \nn_0 \big\}$ for some $d \in \ldb 0, d_0 - 1 \rdb$. Assume that $a_1$ and $a_2$ are both atoms and write $a_1 - a_2 = p' b_N +  m'(d_0, 0)$, for some $m', p' \in \zz$. Suppose, without loss of generality, that $p' > 0$. Therefore,
	\[
		a_1 - a_2 = p' b_N + m'(d_0, 0) = (p' - 1) b_N + (b_N + m' (d_0, 0)) \in  M,
	\]
	which contradicts that $a_1$ is an atom. Then $\{ (d,k) + m(d_0,0) + p b_N \mid  m \in \zz, p \in \nn_0 \big\}$ contains at most one atom for every $d \in \ldb 0, d_0 - 1 \rdb$. Hence, $C_k$ has at most $d_0$ atoms.  
	
	Finally, the equality $\zz \times \nn_0 =  C_0 \cup C_1 \cup \dots \cup C_{k_0-1}$  implies that
	\[
		|\mathcal{A}(M)|\le \sum \limits_{k = 0}^{k_0 - 1}|C_k \cap \mathcal{A}(M)| \le k_0 d_0,
	\]
	which is a contradiction.
\end{proof}

\bigskip
\section{Subatomicity}
\label{sec:subatomicity}

\medskip
\subsection{Near Atomicity}

In this subsection, we study near atomicity in lattice monoids. The following proposition, which is a generalization of \cite[Lemma~5]{nL19}, shows that every nearly atomic monoid is almost atomic.

\begin{prop} \label{prop:NA implies AA}
	Every nearly atomic monoid is almost atomic.
\end{prop}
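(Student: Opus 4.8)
The plan is to unwind the definition of almost atomicity and show that it follows almost immediately from a single witness of near atomicity. Recall that, per the definitions of this section, a non-invertible element is almost atomic precisely when it lies in $\gp(\langle \mathcal{A}(M) \rangle)$, and that this group is exactly the set of differences $u - v$ with $u, v \in \langle \mathcal{A}(M) \rangle$. So the entire task reduces to producing, for each non-invertible $x \in M$, two elements of $\langle \mathcal{A}(M) \rangle$ whose difference is $x$.

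First I would fix a witness $b \in M$ of near atomicity, so that $b + m$ is atomic for every $m \in M$. Applying this with $m = 0$ shows that $b$ itself is atomic, so $b \in \langle \mathcal{A}(M) \rangle$ and, in particular, $b$ is non-invertible. Applying it instead with $m = x$, where $x$ is an arbitrary non-invertible element of $M$, shows that $b + x$ is atomic, whence $b + x \in \langle \mathcal{A}(M) \rangle$ as well. With both $b$ and $b + x$ lying in $\langle \mathcal{A}(M) \rangle$, the identity
\[
	x = (b + x) - b
\]
exhibits $x$ as a difference of two elements of $\langle \mathcal{A}(M) \rangle$, so $x \in \gp(\langle \mathcal{A}(M) \rangle)$. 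Since $x$ was an arbitrary non-invertible element, this yields that $M$ is almost atomic.

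The one point that deserves care — and the only place the argument could stumble — is the implicit claim that $b + x$ is non-invertible; otherwise, calling it atomic would be vacuous and it need not belong to $\langle \mathcal{A}(M) \rangle$. I would settle this directly: if $b + x$ were invertible with inverse $w \in \uu(M)$, then $(b + x) + w = 0$, and rewriting this as $x + (b + w) = 0$ with $b + w \in M$ would force $x$ to be invertible, contradicting the choice of $x$. Hence $b + x$ is genuinely non-invertible, the near-atomicity hypothesis really does place it in $\langle \mathcal{A}(M) \rangle$, and the displayed identity completes the argument. No other obstacle arises: once this invertibility bookkeeping is settled, the result is essentially a one-line consequence of near atomicity.
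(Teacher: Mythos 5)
Your core idea---writing a non-invertible $x$ as a difference of two elements of $\langle \mathcal{A}(M) \rangle$ produced from the near-atomicity witness---is exactly the idea of the paper's proof. The gap is in how you certify that $b$ itself lies in $\langle \mathcal{A}(M) \rangle$. You do this by applying the defining condition at $m = 0$, concluding that $b$ is atomic and hence non-invertible. But under the convention the paper actually works with (inherited from Lebowitz-Lockard), the condition ``$b+m$ is atomic'' is only imposed for non-invertible $m$: the paper's own proof begins by extracting a witness $b$ with $b + m \in \langle \mathcal{A}(M) \rangle$ for all $m \in M \setminus \uu(M)$, and this reading is forced by the earlier assertion that every atomic monoid is nearly atomic (a nontrivial group is vacuously atomic, yet contains no $b$ for which $b + 0$ is ``atomic'' in the strict sense, since atomic elements are non-invertible by definition). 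So your $m=0$ step is unavailable, and in particular the witness $b$ may be invertible---a case your argument does not address and in which $b \in \gp(\langle \mathcal{A}(M) \rangle)$ is not immediate.

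The paper handles both points at once. If $b$ is invertible, then every $m \in M \setminus \uu(M)$ equals $b + (m-b)$ with $m - b$ non-invertible, so $M$ is atomic and a fortiori almost atomic. If $b$ is not invertible, the paper replaces your pair $(b+x,\, b)$ by $(2b+x,\, 2b)$: both $2b + x = b + (b+x)$ and $2b = b + b$ have the form $b + (\text{non-invertible element of } M)$, hence both lie in $\langle \mathcal{A}(M) \rangle$ with no appeal to the $m=0$ instance, and $x = (2b+x) - 2b \in \gp(\langle \mathcal{A}(M) \rangle)$. Your difference identity and your observation that $\gp(\langle \mathcal{A}(M) \rangle)$ consists of differences of elements of $\langle \mathcal{A}(M) \rangle$ are fine; the repair is only to shift the witness from $b$ to $2b$ and to treat the invertible-witness case separately.
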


\begin{proof}
	Let $M$ be a nearly atomic monoid. Then there exists some $b\in M$ such that for every $m \in M \setminus \mathcal{U}(M)$, the inclusion $b + m \in \langle \mathcal{A}(M) \rangle$ holds. Suppose first that $b$ is invertible. Then $m = b + (m - b) \in \langle \mathcal{A}(M) \rangle$ for all $m \in M$. Thus, $M$ is an atomic monoid, implying that $M$ is almost atomic. Now suppose that $b$ is not invertible. Notice that $2 b + m = b + (b + m) \in \langle \mathcal{A}(M) \rangle$ for all $m \in M$. This, along with the fact that $2 b \in \langle \mathcal{A}(M) \rangle$, shows that $M$ is almost atomic.
\end{proof}

Now we will prove that if a lattice monoid $M$ has a unique line $L$ as a supporting line and $L$ has an irrational slope, then if $M$ is a nearly atomic monoid then $M$ contains infinitely many atoms.

\begin{prop}
	Let $L$ be a line with irrational slope that is the only supporting line of a lattice monoid $M$. If $M$ is a nearly atomic monoid then $M$ contains infinitely many atoms.
\end{prop}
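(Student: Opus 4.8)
The plan is to convert the two hypotheses into a single statement about the conic hull of $M$, and then run a short scaling argument against near atomicity.

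\emph{Step 1 (geometry of the cone).} First I would show that $\cl(\cone(M)) = L^+$ (after relabeling), the closed half-plane determined by $L$, and that $M$ is reduced with $M^\bullet \subseteq \inter(L^+)$. Since $L$ has irrational slope, $L \cap \zz^2 = \{0\}$, whence $M \cap L = \{0\}$ and every nonzero element of $M$ lies in the open half-plane $\inter(L^+)$; in particular $-x \notin M$ for $x \in M^\bullet$, so $\mathcal{U}(M) = \{0\}$. For the cone itself I would use that the supporting lines of $M$ are exactly the supporting lines of the closed convex cone $\cl(\cone(M))$ (a closed half-plane is a cone, so it contains $M$ iff it contains $\cl(\cone(M))$). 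Then I would rule out every possibility except the half-plane: a proper pointed sector, a ray, or the zero cone each admits more than one supporting line, a wider cone admits none, and the one-dimensional (line) case would force $\cl(\cone(M)) = L$ and hence $M = \{0\}$, contradicting that $L$ is the \emph{unique} supporting line. The only survivor is the half-plane $L^+$.

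\emph{Step 2 (the atoms span too little).} Suppose toward a contradiction that $\mathcal{A}(M)$ is finite, and set $\sigma := \cone(\mathcal{A}(M))$, a closed finitely generated cone. Because every atom lies in $\inter(L^+)$, and a nonnegative combination of vectors in the open half-plane either vanishes or again lies in $\inter(L^+)$, we get $\sigma \cap L = \{0\}$; in particular $\sigma \subsetneq L^+$. Consequently $M \not\subseteq \sigma$, since otherwise $\cl(\cone(M)) \subseteq \sigma \subsetneq L^+$ would contradict Step~1. Fix an element $m^* \in M \setminus \sigma$.

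\emph{Step 3 (scaling against near atomicity).} Let $b \in M$ witness near atomicity, so that $b + m \in \langle \mathcal{A}(M) \rangle \subseteq \sigma$ for every $m \in M$. Applying this to $m = n m^* \in M$ gives $b + n m^* \in \sigma$ for all $n \in \nn$. Since $\sigma$ is a cone, $\tfrac1n (b + n m^*) = \tfrac1n b + m^* \in \sigma$, and letting $n \to \infty$, together with the closedness of $\sigma$, yields $m^* \in \sigma$ — contradicting the choice of $m^*$. Hence $\mathcal{A}(M)$ must be infinite.

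The main obstacle is Step~1: extracting from ``unique supporting line of irrational slope'' the clean conclusion $\cl(\cone(M)) = L^+$. This is exactly where irrationality is indispensable (it forces $M \cap L = \{0\}$, makes $M$ reduced, and excludes the degenerate one-dimensional case) and where uniqueness is indispensable (it eliminates proper sectors and rays, which would furnish several supporting lines). Once the cone of $M$ is pinned down as the full half-plane while the finitely many atoms can only generate a subcone meeting $L$ at the origin, Steps~2 and~3 are routine.
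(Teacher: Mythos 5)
Your proof is correct, and it reaches the contradiction by a genuinely different mechanism than the paper. The paper never identifies $\cl(\cone(M))$ explicitly; instead it uses the uniqueness of $L$ to manufacture a sequence $(d_n)$ in $M$ whose directions converge to a boundary direction $v$ of $L$ (irrationality forces these points into the open half-plane), then carries out explicit cosine-limit computations to show that the directions of the atomic elements $b+d_n$ also converge to $v$, and finally observes that finitely many atoms would trap all atomic elements in a cone $\cone(-v,a)$ whose angular distance to $v$ is positive. You replace both halves of that argument: uniqueness of the supporting line plus the classification of closed convex cones in $\rr^2$ pins down $\cl(\cone(M))=L^+$ outright, and the angle-chasing is replaced by the one-line observation that $b+nm^*\in\sigma$ for all $n$ forces $m^*\in\sigma$ by scaling and closedness of the finitely generated cone $\sigma=\cone(\mathcal A(M))$. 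The underlying tension is the same in both proofs --- finitely many atoms generate a closed proper subcone of $L^+$ meeting $L$ only at the origin, while near atomicity pushes a translate of all of $M$ into that subcone --- but your version isolates cleanly where irrationality (no nonzero lattice points on $L$, hence $\sigma\cap L=\{0\}$ and $M$ reduced) and uniqueness (ruling out sectors, rays, lines, and the trivial cone) are used, and it avoids the paper's construction of a norm-unbounded sequence approaching the boundary and the attendant limit computations; the price is an appeal to two standard convexity facts (the classification of closed convex cones in the plane and the closedness of finitely generated cones). Two small points worth a sentence each in a final write-up: dispose of the degenerate case $\mathcal A(M)=\emptyset$ (where $\langle\mathcal A(M)\rangle=\{0\}$ and near atomicity fails immediately since $M$ is reduced and nontrivial), and note explicitly that $\langle\mathcal A(M)\rangle\subseteq\cone(\mathcal A(M))$, which is what lets you pass from ``$b+nm^*$ is atomic'' to ``$b+nm^*\in\sigma$''.
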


\begin{proof}
	Suppose that $M$ is nearly atomic. Let $v$ be a unit vector such that $L = v \rr$, and let $u$ be a unit vector that is normal to $v$ in a way that $\langle u,x \rangle > 0$ for all $x$ in $M$.  Suppose that there exists a vector $w \in \rr^2$ such that $\langle u,w \rangle > 0$, and for every $x \in M$ we have that $x \in \cone(-v,w)$. Hence, the line $w\rr$ is clearly a supporting line of $M$ which contradicts the uniqueness of $L$. Then for every vector $w$ satisfying that $\langle u,w \rangle > 0$, there exists some element $x \in M^\bullet$ such that $x \in \cone(w,v)$.  Hence, there exists some sequence $( d_n )_{n \in \nn}$, with $d_n \in M$ for every $n\in \nn$, such that $\lim\limits_{n \to \infty} \norm{d_n}=\infty$ and $d_n \in \cone(d_{n-1},v)$. Moreover, because $L$ has irrational slope it has no nonzero point with both coordinates rational, and therefore, $d_n$ is in the interior of $\cone(d_{n-1},v)$  for every $n\in \nn$. Then the angle between $d_n$ and $v$, namely $\alpha_n$, approaches to $0$ when $n$ goes large. On the other hand, since $M$ is nearly atomic, we know that there exists some $b\in M$ such that $b+d_n$ is an atomic element for every $n\in \nn$. Now let us prove that the angle between $b+d_n$ and $v$, namely $\beta_n$, approaches to $0$  when $n$ goes large. First observe that
	\[
		\lim \limits_{n\to \infty} \frac{ \langle v,d_n + b \rangle}{\norm{v} \norm{d_n}} = \lim\limits_{n\to \infty} \frac{\langle v,d_n \rangle}{\norm{v} \norm{d_n}} + \lim\limits_{n\to \infty}\frac{\langle v, b \rangle}{\norm{d_n}} = \lim\limits_{n\to \infty} \cos \alpha_n = 1,
	\]
	and also that
	\[
		\lim \limits_{n \to \infty} \frac{\norm{d_n + b}^2}{ \norm{d_n}^2}=\lim \limits_{n \to \infty} \frac{\norm{d_n}^2 + 2\langle d_n, b\rangle + \norm{b}^2}{\norm{d_n}^2} = 1 + \lim \limits_{n \to \infty} \left( 2 \frac {\norm{b} \cos \gamma_n}{\norm{d_n}} + \frac{\norm{b}^2}{\norm{d_n}^2}\right) = 1,
	\]
	where $\gamma_n$ is the angle between $b$ and $d_n$. Finally we have that
	\[
		\lim \limits_{n\to \infty} \cos \beta_n = \lim \limits_{n\to \infty} \frac{\langle v,d_n+b\rangle}{\norm{v}\norm{d_n+b}} = \lim\limits_{n\to \infty} \frac{\langle v, d_n + b \rangle}{\norm{v}\norm{d_n}} \lim \limits_{n \to \infty} \frac{\norm{d_n}}{\norm{d_n + b}} = 1,
	\]
	and so $\beta_n$ approaches to 0  when $n$ goes large and the sequence $(d_n+b)_{n\in\nn}$ has infinitely many elements in $\cone(w,v)$ for every $w$ such that $\langle u,w \rangle > 0$. So, if we suppose that $M$ has finitely many atoms, then there exists an atom $a$ such that all the atomic elements of $M$ belong to $\cone(-v, a)$ which contradicts the previous statement.
\end{proof}

We proceed to identify a large class consisting of lattice monoids where being nearly atomic and being atomic are equivalent conditions.

\begin{theorem}
	Let $M$ be a rationally supported lattice monoid of $\zz^2$. 
	 If $M$ is nearly atomic, then $M$ is atomic.
\end{theorem}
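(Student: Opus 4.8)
The plan is to first normalize via Lemma~\ref{lemma1}, so that I may assume $M \subseteq \zz \times \nn_0$ with the supporting $x$-axis $L_0 := \{(x,0) : x \in \zz\}$ appearing as a face of $M$. If $\mathcal{A}(M)$ is infinite, then Theorem~\ref{thm:Nearly_rationa_finite_atoms} already gives that $M$ is atomic, so I may assume from now on that $M$ has only finitely many atoms. Arguing by contradiction, suppose $M$ is not atomic. Then, exactly as in the proof of Theorem~\ref{thm:Nearly_rationa_finite_atoms}, I extract a sequence $(b_n)_{n \ge 0}$ of non-invertible, non-atomic, pairwise non-associate elements with $b_{n+1} \mid_M b_n$; writing $b_n = b_{n+1} + c_n$, each $c_n \in M$ is a non-unit. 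For $v \in \zz \times \nn_0$ write $v = (x(v), h(v))$, so that $h$ is the \emph{height} and is additive; note $h(b_n) = h(b_{n+1}) + h(c_n)$, whence the heights of the $b_n$ are non-increasing.

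Next I split according to the face $M_0 := L_0 \cap M$, a submonoid of $\zz \times \{0\} \cong \zz$, following the two cases in the proof of Theorem~\ref{thm:Nearly_rationa_finite_atoms}. In \emph{Case 1}, where $M_0$ is trivial or a group, every element of $M_0$ is a unit, so each non-unit $c_n$ has strictly positive height; then the heights of the $b_n$ form a strictly decreasing sequence in $\nn_0$, which is impossible. This disposes of Case 1 without invoking near atomicity at all. The substance of the argument lies in \emph{Case 2}, where $M_0^\bullet$ lies strictly on one side of the origin, say the positive $x$-side, with smallest element $(d_0,0)$. Here the heights of the $b_n$ are non-increasing and hence eventually equal to a constant $k_0$; for large $n$ the difference $c_n$ then has height $0$, so $c_n \in M_0^\bullet$ and $x(c_n) \ge d_0$. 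Consequently $x(b_n)$ decreases by at least $d_0$ at each step and tends to $-\infty$, while $h(b_n) = k_0$ is eventually fixed. (If $k_0 = 0$, then eventually $b_n \in M_0$, which is divisor-closed in $M$ and isomorphic to a submonoid of $\nn_0$, hence atomic, forcing $b_n$ to be atomic — a contradiction; so I may take $k_0 > 0$.)

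This is where I bring in near atomicity. Fixing a witness $b \in M$ (so $b + m$ is atomic for every $m \in M$), every $b + b_n$ is atomic, has fixed height $H := h(b) + k_0$, and has $x(b + b_n) \to -\infty$. The main obstacle — and the key geometric input — is to show that, because $M$ has only finitely many atoms, atomic elements of a \emph{fixed} height cannot have arbitrarily negative $x$-coordinate. Indeed, in any factorization $v = \sum_{a \in \mathcal{A}(M)} n_a\, a$ one has $h(v) = \sum_{h(a) > 0} n_a\, h(a)$, so $\sum_{h(a) > 0} n_a \le H$; as there are finitely many atom types, the positive-height atoms contribute a quantity bounded below to $x(v)$, while the height-$0$ atoms all lie on the positive $x$-side and contribute a non-negative amount. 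Hence $x(v)$ is bounded below by a constant depending only on $H$ and the finite set $\mathcal{A}(M)$.

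Applying this bound to the atomic elements $b + b_n$, all of height $H$, contradicts $x(b + b_n) \to -\infty$, completing Case 2 and hence the proof. I expect the delicate points to be purely bookkeeping: verifying that the face $M_0$ is genuinely covered by the two cases above, confirming that $c_n$ is a non-unit precisely because the $b_n$ are pairwise non-associate, and handling the degenerate subcase in which no atom has positive height (there no atomic element has positive height, so $b + b_N$ of height $H = k_0 > 0$ could not be atomic, contradicting near atomicity outright). The role of rational support is essential throughout: it is exactly what licenses the embedding $M \subseteq \zz \times \nn_0$ and thereby the sign constraint placing the height-$0$ atoms on the positive $x$-side, which is what makes the lower bound on $x(v)$ possible.
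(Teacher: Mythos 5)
Your proposal is correct and follows essentially the same strategy as the paper's proof: reduce to $M \subseteq \zz\times\nn_0$ via Lemma~\ref{lemma1}, use Theorem~\ref{thm:Nearly_rationa_finite_atoms} to force $\mathcal{A}(M)$ finite, extract a divisibility chain of non-atomic elements whose height stabilizes while the horizontal coordinate drifts unboundedly, and then derive a contradiction from the fact that an atomic element of bounded height admits only boundedly many positive-height atoms in any factorization, which bounds its horizontal coordinate. The only differences are presentational (coordinates and signs in place of the paper's projections $\ppp_u$, $\ppp_L$, plus your explicit handling of the degenerate subcases), not substantive.
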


\begin{proof}
	Assume that $M$ is a rationally supported nearly atomic monoid. 
	 By virtue of  Lemma \ref{lemma1}, we can assume that $M$ is a submonoid of $\zz \times \nn_0$ and so $L$ is the line $y = 0$. Let $u = (0,1)$. Suppose, by way of contradiction, that $M$ is not atomic. Therefore, there exists a sequence $( d_n )_{n \in \nn}$ of pairwise non-associate elements of $M$ such that  $d_{n+1} \mid_M d_n$ for every $n \in \nn$ and none of its elements can be written as a sum of atoms.
	
	It follows from our argument in \emph{Case 1} of the proof of Theorem~\ref{thm:Nearly_rationa_finite_atoms} that $L$ contains an element of $M$ different from $(0,0)$. Moreover, we infer that $L$ does not contain elements of $M$ at both sides of the origin.

	Since $M$ is nearly atomic, we can take $r \in M$ such that $r + M \subseteq \langle \mathcal{A}(M) \rangle$. Observe that the sequence $( \norm{ \ppp_u(d_n) } )_{n \in \nn}$ is decreasing and has finitely many different elements. Then there exists $k \in \nn$ and $N \in \nn$ such that $\norm{ \ppp_u(d_n) } = k$ for all $n \geq N$. Also, notice that we can take the previous $N$ large enough to guarantee $ r + d_n$ is in the opposite side of $u\rr$ from $v$. This implies that $( \norm{ \ppp_L(r + d_{n}) } )_{n \in \nn}$ is strictly increasing for $n \geq N$.
	
Let us set $A := \{ a \in \mathcal{A}(M) \mid 1 \le  \norm{ \ppp_u(a) } \leq k + \norm{\ppp_u(r)} \}$. It follows from Theorem~\ref{thm:Nearly_rationa_finite_atoms} that $\mathcal{A}(M)$ has finitely many elements and so does $A$. Take $a$ to be the element in $A$ with maximum distance to $u\rr$. Since  $( \norm{ \ppp_L(r + d_{n}) } )_{n \in \nn}$ is strictly increasing for $n \geq N$, we can take $d_m$ with $m > N$ such that $\norm{\ppp_L(r + d_m)} > \norm{\ppp_L(a)} (\norm{\ppp_u(r)} + k )$. Let $z = \sum_{i = 1}^{m_1} a_i + \sum_{i = 1}^{m_2} b_i$ be a factorization of $r + d_m$  such that $a_i \in A$ for every $1 \leq i \leq m_1$ and $b_{i} \in L$ for every $1 \leq i \leq m_2$. Hence,
	\begin{align*}
		\norm{\ppp_u(r)} +  \norm{\ppp_u(d_m)} = \norm{\ppp_u(r + d_m)} 
		&=\norm{\ppp_u\left (\sum_{i = 1}^{m_1} a_i + \sum_{i = 1}^{m_2} b_i \right )} \\
		&= \sum_{i = 1}^{m_1} \norm{\ppp_u(a_i)} +  \sum_{i = 1}^{m_2} \norm{\ppp_u(b_i)} =  \sum_{i = 1}^{m_1} \norm{\ppp_u(a_i)} \ge m_1,
	\end{align*}
	which directly implies that $m_1 \le \norm{\ppp_u(r)} + k $ because $k =  \norm{\ppp_u(d_m)}$. This leads to
	\begin{align*}
		\norm{\ppp_L(r + d_m)} = \norm{\ppp_L \left ( \sum_{i = 1}^{m_1} a_i +  \sum_{i = 1}^{m_2} b_i \right )}
		&  = \norm{\ppp_L \left ( \sum_{i = 1}^{m_1} a_i \right)} - \sum_{i = 1}^{m_2} \norm{\ppp_L(b_i)} \\
		& \leq \sum_{i = 1}^{m_1} \norm{\ppp_L(a_i)} -  \sum_{i = 1}^{m_2} \norm{\ppp_L(b_i)} \\
		& \le  \sum_{i = 1}^{m_1} \norm{\ppp_L(a_i)} \\
		& \le m_1  \norm{\ppp_L(a)} \\
		& \le \norm{\ppp_L(a)} (\norm{\ppp_u(r)} + k ),
	\end{align*}
	where the second equality holds since $b_i$ and $r + d_m$ are in different quadrants. We arrive here to a contradiction, implying that $M$ is atomic.
\end{proof}

Our final goal in this subsection is to construct a nearly atomic lattice monoid that is not atomic. For such a purpose, we need the following lemmas. 

\begin{lemma} \label{lem:lines and lattice points}
	Let $L$ be a real line through the origin in $\rr^2$. Then for every $\epsilon > 0$ there exists $v \in \zz^2 \setminus \{(0,0)\}$ such that $d(v,L) < \epsilon$.
\end{lemma}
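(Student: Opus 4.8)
The plan is to reduce the statement to a one-dimensional Diophantine approximation problem by measuring distance along the direction normal to $L$. Fix a unit vector $u = (\alpha, \beta)$ normal to $L$; since $L$ passes through the origin, the distance from any $v = (v_1, v_2) \in \rr^2$ to $L$ equals the length of the orthogonal projection of $v$ onto $u\rr$, that is, $d(v, L) = |\langle u, v \rangle| = |\alpha v_1 + \beta v_2|$. Thus it suffices to produce, for each $\epsilon > 0$, a nonzero lattice point $v \in \zz^2$ with $|\langle u, v \rangle| < \epsilon$.

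First I would dispose of the degenerate case in which $L$ has rational slope (this includes the vertical and horizontal lines). In that situation $L$ contains a nonzero lattice point $v$, obtained by clearing denominators in a rational direction vector of $L$, and then $d(v, L) = 0 < \epsilon$, so any such $v$ works. This case is immediate and carries no content.

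The substance lies in the case of irrational slope. Here I would write $L$ as $\{(t, \lambda t) \mid t \in \rr\}$ for some irrational $\lambda$, so that for $v = (m, n)$ we have $d(v, L) = |\lambda m - n| / \sqrt{\lambda^2 + 1}$. The key step is Dirichlet's approximation theorem: for every positive integer $N$ there exist integers $m$ with $1 \le m \le N$ and $n$ such that $|\lambda m - n| < 1/N$. Consequently $d(v, L) < 1/(N \sqrt{\lambda^2+1}) \le 1/N$, and since $m \ge 1$ the point $v = (m,n)$ is nonzero. Given $\epsilon > 0$, choosing $N$ with $1/N < \epsilon$ then yields the desired lattice point.

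The only genuine obstacle is the Diophantine approximation step, and even that is standard: if one prefers a self-contained argument, it follows from the pigeonhole principle by partitioning $[0,1)$ into $N$ subintervals of length $1/N$ and observing that two of the $N+1$ fractional parts of $0, \lambda, 2\lambda, \dots, N\lambda$ must lie in the same subinterval; their difference supplies the required pair $(m, n)$. Everything else is a routine reduction, so no further complications are expected. One could alternatively give a single argument covering both slope cases at once by applying Dirichlet's theorem to $-\alpha/\beta$ when $\beta \ne 0$ (and handling the trivial subcase $\beta = 0$ directly), which automatically subsumes the rational case as the instance where the approximation error vanishes.
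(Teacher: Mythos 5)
Your proof is correct and follows essentially the same route as the paper: the rational-slope case is dismissed by a lattice point on $L$ itself, and the irrational-slope case is reduced to one-dimensional Diophantine approximation of the irrational slope. The only cosmetic difference is that you invoke Dirichlet's theorem (with the pigeonhole argument spelled out) where the paper cites the density of the fractional parts $\{nx - \lfloor nx \rfloor\}$ in $[0,1]$, and you compute the distance via the normal projection where the paper bounds it by the distance to the nearby point $mw \in L$; both are interchangeable standard facts and the argument is complete.
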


\begin{proof}
	Fix $\epsilon > 0$. If the slope of $L$ is a rational number, then it is clear that $L$ must contain a nonzero vector with integer coordinates and the statement of the lemma follows. Then we assume that the slope of $L$ is irrational.  Take $x \in \rr$ such that $w := (x,1) \in L$. Since the slope of $L$ is irrational, $x$ is also irrational. It is well known that the set $\{n x - \lfloor n x \rfloor \mid n \in \nn\}$ is dense in $[0,1]$. Then there exists $m \in \nn$ such that $m x - \lfloor m x \rfloor < \epsilon$. After setting $v := (\lfloor m x \rfloor, m) \in \zz^2$, we see that
	\[
		d(v,L) \le \norm{v - mw} = m x - \lfloor m x \rfloor < \epsilon.
	\]
\end{proof}

\begin{lemma} \label{lem:lines and lattice points 2}
	Let $L$ be a line in $\rr^2$ with irrational slope. Then for every $\epsilon > 0$ there exists $v \in \zz^2 \setminus \{(0,0)\}$ such that $d(v,L) < \epsilon$. Moreover, we can take $v$ with both coordinates even, or with both coordinates odd.
\end{lemma}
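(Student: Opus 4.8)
The plan is to isolate the only real content of the statement---that lattice points come arbitrarily close to any line of irrational slope---in a single density claim, and then obtain the parity refinement by affine changes of variable that convert the conditions ``both coordinates even'' and ``both coordinates odd'' into ordinary unconstrained approximation problems for auxiliary lines that again have irrational slope. Since an irrational slope is neither vertical nor horizontal, I would write $L = \{(x,y) \in \rr^2 : y = sx + c\}$ with $s \in \rr \setminus \qq$ and $c \in \rr$, and record the formula $d\big((a,b),L\big) = |sa - b + c|/\sqrt{s^2 + 1}$ valid for every $(a,b) \in \zz^2$.

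The core claim I would prove first is: for every $s \in \rr \setminus \qq$, every $t \in \rr$, and every $\delta > 0$, there exist infinitely many pairs $(a,b) \in \zz^2$ with $|sa - b - t| < \delta$. This amounts to the density in $(\rr,+)$ of the additive subgroup $\{sa - b : a,b \in \zz\}$, which contains both $1$ and $s$; since $s$ is irrational, the subgroup generated by $1$ and $s$ is dense, and this is precisely the equidistribution fact already invoked in the proof of Lemma~\ref{lem:lines and lattice points} (density of $\{ms - \lfloor ms \rfloor : m \in \nn\}$ in $[0,1]$). The infinitude is automatic: if one pair works, then adding to it any further pair $(a',b')$ with $|sa' - b'|$ small enough produces another solution, so the solution set cannot be finite.

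Granting the claim, the first assertion follows by taking $t = -c$ and $\delta = \epsilon\sqrt{s^2+1}$ and selecting a solution $(a,b) \neq (0,0)$, which exists by infinitude. For the parity refinement I would use dilation and translation. Writing $v = 2(a,b)$ forces both coordinates even, and since the dilation $p \mapsto 2p$ scales all distances by $2$ and carries the line $\{y = sx + c/2\}$ onto $L$, one has $d\big(2(a,b),L\big) = 2\,d\big((a,b), \{y = sx + c/2\}\big)$; applying the claim to this line (of the same irrational slope $s$) with tolerance $\epsilon/2$ and a nonzero solution yields an admissible $v = 2(a,b) \neq (0,0)$. Similarly, writing $v = 2(a,b) + (1,1)$ forces both coordinates odd (and $v \neq (0,0)$ automatically), and a short computation gives $d(v,L) = 2\,d\big((a,b), \{y = sx + (s + c - 1)/2\}\big)$, again a line of irrational slope $s$, so the claim once more produces $(a,b)$ making $d(v,L) < \epsilon$.

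The argument is essentially routine once the claim is in hand; the only points needing care are ensuring the approximating point is nonzero in the ``both even'' case (handled by the infinitude in the claim, whereas in the ``both odd'' case nonzeroness is free) and checking that the auxiliary lines produced by the dilation and translation retain the irrational slope $s$, which they do since these operations alter only the intercept. Thus I expect no serious obstacle beyond this bookkeeping, with the parity-constrained statement reducing cleanly to the unconstrained density fact underlying Lemma~\ref{lem:lines and lattice points}.
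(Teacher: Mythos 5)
Your proof is correct, and for the first assertion it takes a genuinely different route from the paper, while the parity refinement is handled in essentially the same way. The paper proves the affine case by a geometric descent: it applies Lemma~\ref{lem:lines and lattice points} to the parallel line $L_0$ through the origin to get a lattice point $w$ with $\norm{\ppp_u(w)}$ small, then starts from an arbitrary lattice point $v_0$ above $L$ and subtracts $\big\lfloor (\norm{\ppp_u(v_0)} - d(L,L_0))/\norm{\ppp_u(w)} \big\rfloor$ copies of $w$ to land within $\norm{\ppp_u(w)}$ of $L$. You instead reduce everything to the density of the subgroup $\zz + s\zz$ in $\rr$ (inhomogeneous approximation), which subsumes both Lemma~\ref{lem:lines and lattice points} and the affine upgrade in a single claim and comes with the explicit distance formula $d((a,b),L) = |sa-b+c|/\sqrt{s^2+1}$; this is cleaner and also delivers infinitude of solutions for free, which you correctly use to guarantee a nonzero approximant in the even case. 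For the parity statement both arguments are the same idea in different clothing: the paper approximates the line $L'$ at half the distance from $L_0$ and doubles, then translates $L$ by $(-1,-1)$ for the odd case; you compose the map $p \mapsto 2p$ (resp.\ $p \mapsto 2p + (1,1)$) with approximation of the auxiliary line of the same irrational slope and intercept $c/2$ (resp.\ $(s+c-1)/2$), and your computations of the transformed intercepts check out. The only point that deserves a word more than you give it is the infinitude in the core claim: you should note that there are infinitely many \emph{distinct} pairs $(a',b')$ with $|sa'-b'|$ small (e.g.\ $(m,\lfloor ms \rfloor)$ for infinitely many $m$ with $\{ms\}$ near $0$, distinct because their first coordinates differ), so that adding them to one solution really does produce infinitely many solutions; this is routine and does not affect correctness.
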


\begin{proof}
	Let $L_0$ be a line through the origin parallel to $L$ and $u$ be a vector normal to $L_0$. Assume, without loss of generality, that $L$ is above $L_0$. Fix $\epsilon > 0$. Take a lattice point $v_0$ in $L^+$. From Lemma~$\ref{lem:lines and lattice points}$ there exists a lattice point $w \in L_0^+$ such that $\norm{\ppp_u(w)} < \epsilon$.
	Set 
	\[
		v := v_0 - \big \lfloor \norm{\ppp_u(v_0) - d(L, L_0)} / \norm{\ppp_u(w)} \big \rfloor  w. 
	\]
	Then we have that $d(v, L) = \norm{\ppp_u(v)} - d(L, L_0) < \norm{\ppp_u(w)} < \epsilon$, which concludes the first part of our proof.
	
	Let $L'$ be a parallel line to $L$ such that $2 d(L', L_0) = d(L, L_0)$. From the first part of this proof, there exists $v \in \zz^2 \cap L'^+$ such that $ d(v,L') < \epsilon / 2$. Then we have
	\[
		\norm{\ppp_u(2v)} = 2\norm{\ppp_u(v)} < 2 (d(L', L_0) + \epsilon / 2) =  d(L, L_0) + \epsilon.
	\]
	Hence, $2v$ is a lattice point with even coordinates such that $d(2v, L) < \epsilon$.
	
	Finally, we must prove the existence of a lattice point $v'$ with odd coordinates such that $d(v', L) < \epsilon$. Let $w' = (-1,-1)$ and consider the affine line $L_{w'} := w' + L$. Then there exists a lattice point $v_1$ with even coordinates such that $d(v_1,L_{w'}) < \epsilon$. Observe that $v' := v_1 + (1,1)$ is a lattice point with odd coordinates. Hence, it follows from the definition of $L_{w'}$ that $d(v',L) < \epsilon$, as desired.
\end{proof}

We are in a position to exhibit a nearly atomic lattice monoid that is not atomic.

\begin{example} (cf. \cite[Proposition~3.6]{fG23}) \label{ex:NA no AA}
	Let $L$ be a line through the origin in $\rr^2$ with positive irrational slope. Let $v$ be the unit vector in the first quadrant such that $L = v \rr$, and let $u$ be the unit vector in the second quadrant that is normal to $v$. Also, for any vector $w$, let $L_w^+$ (resp., $L_w^-$) be the affine upper (resp., lower) closed half-space determined by the affine line $L_w := w + L$.
	
	Let $w = (0,2)$ and $\ell = \norm{\ppp_u(w)}$. Since the only lattice point of $L$ is the origin, the only lattice point of $L_w$ is $w$. Take $a_0$ to be a point with odd coordinates in $\cone(w,v)$ such that $\ell/2 < \norm{\ppp_u(a_0)} < \ell$. By Lemma~\ref{lem:lines and lattice points 2} we can take a  point $a_1$ with odd coordinates in the interior of $\cone(v, a_0)$ such that $\ell > \norm{\ppp_u(a_1)} > \norm{\ppp_u(a_0)}$. We can construct, by repeating this process, an infinite sequence of lattice points with odd coordinates $(a_n)_{n \in \nn_0}$ such that $a_{n + 1} \in \cone(v,a_n)$ and $\ell > \norm{\ppp_u(a_{n+1})} > \norm{\ppp_u(a_n)}$ for every $n \in \nn_0$.
	
	Now let $d_0$ be a point with even coordinates in the interior of $\cone(w,L_w)$  such that $\ell < \norm{\ppp_u(d_0)} < 2\norm{\ppp_u(a_0)}$. Following Lemma~\ref{lem:lines and lattice points 2} we can construct inductively a sequence of lattice points with even coordinates $(d_n)_{n \in \nn_0}$ such that $d_{n+1} \in \cone(d_n, L_w)$ and $\norm{\ppp_u(d_{n+1})} < \norm{\ppp_u(d_{n}) }$ for every $n\in \nn_0$.
	
	Let $A$ be the set of all lattice points with even coordinates that belong to $\cone(-v,u)$. Now consider the lattice monoid $ M := \langle A \cup \{a_n \mid n \in \nn_0\} \cup \{d_n\mid n \in \nn_0\} \cup \{w\} \rangle$. Figure~\ref{fig:nearly_no_atomic_lattice_monoid} shows the elements of this generator set of $M$, present in a region of $\zz^2$ containing the origin.
	
	We claim that all the terms of the sequence $(a_n)_{n \in \nn_0}$ are atoms in $M$. Assume otherwise, that $a_k$ is not an atom, for some $k \in \nn_0$. If it is the case, then $a_t$  divides $a_k$ for some $t\in \nn_0$, given that $a_k$ has odd coordinates.  Furthermore, $t < k$ holds, since $\norm{\ppp_u(a_t)} > \norm{\ppp_u(a_k)}$ for every $t > k$. Write $a_k = a_t + r$ for some $r \in M$. Then $r \in \cone(u,v)$ and $\norm{\ppp_u(r)} < \ell/2$, but this contradicts that $r \in M$.
	
	Now we aim to prove that $M$ is not atomic by arguing that none of the elements of $(d_n)_{n \in \nn_0 }$ can be written as a sum of atoms. First observe that $d_n$ is not an atom for any $n \in \nn_0$. This follows from the fact that $d_n - d_{n+1}$  is a vector that belongs to $\cone(u, -v)$ with even coordinates, for every $n \in \nn_0$, and hence, it belongs to $M$. Assume that there exists $m \in \nn_0$ such that $d_m$ can be written as a sum of atoms. Since $d_m \in \cone(u, v)$, there must be an atom $a$, also inside of $\cone(u,v)$, that divides $d_m$.  Suppose that $a = a_k$ for some $k \in \nn_0$. Since $a_k$ has odd coordinates, there exists $t \in \nn_0$ such that $ a_k + a_{t}$ divides $d_{m}$. However, $\norm{\ppp_u(d_m)} < 2\norm{\ppp_u(a_0)} < \norm{\ppp_u(a_k)} + \norm{\ppp_u(a_{t})}$, which leads to a contradiction. Since $a$ is inside $\cone(u, v)$ and it cannot be divided by any element of $\{a_n,d_n \mid n \in \nn_0\}$, the only possibility left is $a = w$. If it is the case, then $d_m - w$ lies inside of $\cone(u, v)$ and $\norm{\ppp_u(d_m - w)} < \norm{\ppp_u(a_0)}$. Thus, $d_m - w$  does not belong to $M$. We conclude that $d_m$ cannot be written as a sum of atoms.
	
	It is not hard to see that $w$ is also an atom. We claim now that every element in $A$ is atomic. Take an arbitrary $s \in A$ and let $r = s - \lfloor \norm{\ppp_u(s)} / \ell \rfloor  w$. Observe that $r$ is also in $A$, so $r \in M$. If we prove that $r$ can be written as a sum of atoms, then $s$ can also be written as a sum of atoms. Suppose that $a_k \mid_M r$ for some $k \in \nn_0$. Similar as we argue before this implies that $a_{k} + a_{t} \mid_M r$ for some $t \in \nn_0$, which is  not possible since $\norm{\ppp_u(r)} < \ell < \norm{\ppp_u(a_k + a_{t})}$. Since $w$ does not divide $r$ either, the divisors of $r$ are all in $\cone(-v,u)$. It is not hard to observe that $0$ is not a limit point of the projections in the $x$-axis of the elements in $\cone(-v,u)$, and thus $r$ can be written as a sum of atoms.
	
	Finally, let us prove that $M$ is nearly atomic by arguing that $w + M \subset \langle \mathcal{A}(M) \rangle$. Since every element in  $\langle A \cup \{a_n \mid n \in \nn_0\} \cup \{w\} \rangle $ is atomic, it is enough to show that every element $r \in \{w + d \mid d \in \langle \{d_n \mid n \in \nn_0\}\rangle \setminus (0,0) \}$ is atomic. Take $k \in \nn_0$ in such a way that $a_k$ is large enough to guarantee that $r - a_k \in \cone(-v, u)$. Since $\norm{\ppp_u(r)} > 2\ell$, we have that $r - 2a_k$ is also inside of $\cone(-v,u)$. Moreover, since $r$ has even coordinates, $r - 2a_k$ also has even coordinates, implying that $r - 2a_k \in A$. Then $r - 2a_k$ is atomic and so is $r$. Hence, we conclude that $M$ is a nearly atomic lattice monoid that is not atomic.
	
	\begin{figure}[h]
		\includegraphics[width=10cm]{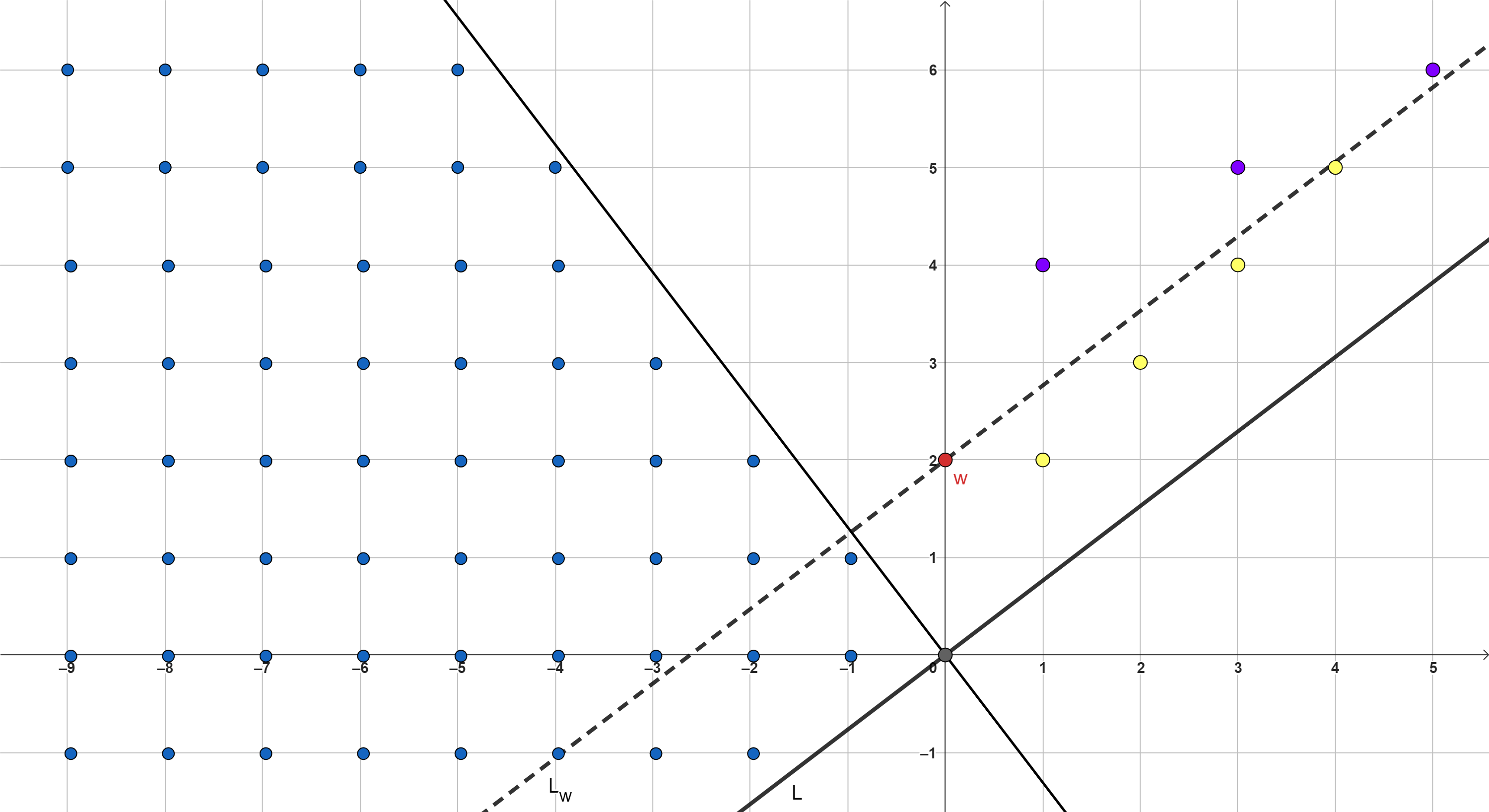}
		\caption{Region of $\zz^2$ showing some of the elements of the generator set used to define the lattice monoid in this example. The elements of $A$ are displayed in blue, those of $(d_n)_{n \in \nn_0}$ in purple and those of $(a_n)_{n \in \nn_0}$ in yellow. Also $w$ is displayed in red as one can see.}
		\label{fig:nearly_no_atomic_lattice_monoid}
	\end{figure}
\end{example}

\medskip
\subsection{Almost Atomicity and Quasi-Atomicity}

Let us take a look now at an example of an almost atomic lattice monoid that is not nearly atomic.

\begin{example}\label{ex:almost atomic not nearly atomic}
	Consider the lattice monoid $M'=\langle(1,0),(1,1) \rangle$. Now set $M := M'\cup \left(\zz \times \mathbb{N}_{\ge 2} \right)$, and observe that $M$ is closed under addition so it is a reduced lattice monoid. Figure~\ref{fig:almost_no_nearly} shows the monoid $M$. It is clear that $(1,0)$ and $(1,1)$ are both atoms. We claim that. $\mathcal{A}(M) = \{(1, 0), (1, 1)\}$. It suffices to show that none of the elements in $M \setminus M'$ is an atom of $M$. Fix $(x,y) \in M \setminus M'$. As $(x,y) \notin M'$, we see that $(x,y) \in \mathbb{Z}  \times\mathbb{N}_{\ge 2}  $, and so $(x - 1,y) \in \mathbb{Z}  \times\mathbb{N}_{\ge 2} $, which implies that $(1,0)$ divides $(x,y)$. Hence, $(x,y)$ is not an atom, as desired.
	\smallskip
	
	Let us show now that the monoid $M$ is almost atomic. Notice that $\{(x,y) \mid 0 \le y\leq x\} \subset \langle \mathcal{A}(M) \rangle$ (indeed, it is not hard to check that the other inclusion holds). Consider an element $(x,y) \in M \backslash \langle \mathcal{A}(M) \rangle$. Then $y>x$, which ensures that $(y-x,0) \in \langle (1,0) \rangle \subseteq \langle \mathcal{A}(M) \rangle$. Now we see that $(y-x,0) + (x,y) = (y,y) \in \langle \mathcal{A}(M) \rangle$. Hence, $M$ is almost atomic.
	
	Finally, we check that $M$ is not nearly atomic. Observe that for every element $(x,y) \in M$ the element $(-x,2)$ belongs to the monoid and $(-x,2) + (x,y) = (0,y+2) \notin \langle \mathcal{A}(M) \rangle$.
	
		\begin{figure}[h]
		\includegraphics[width=10cm]{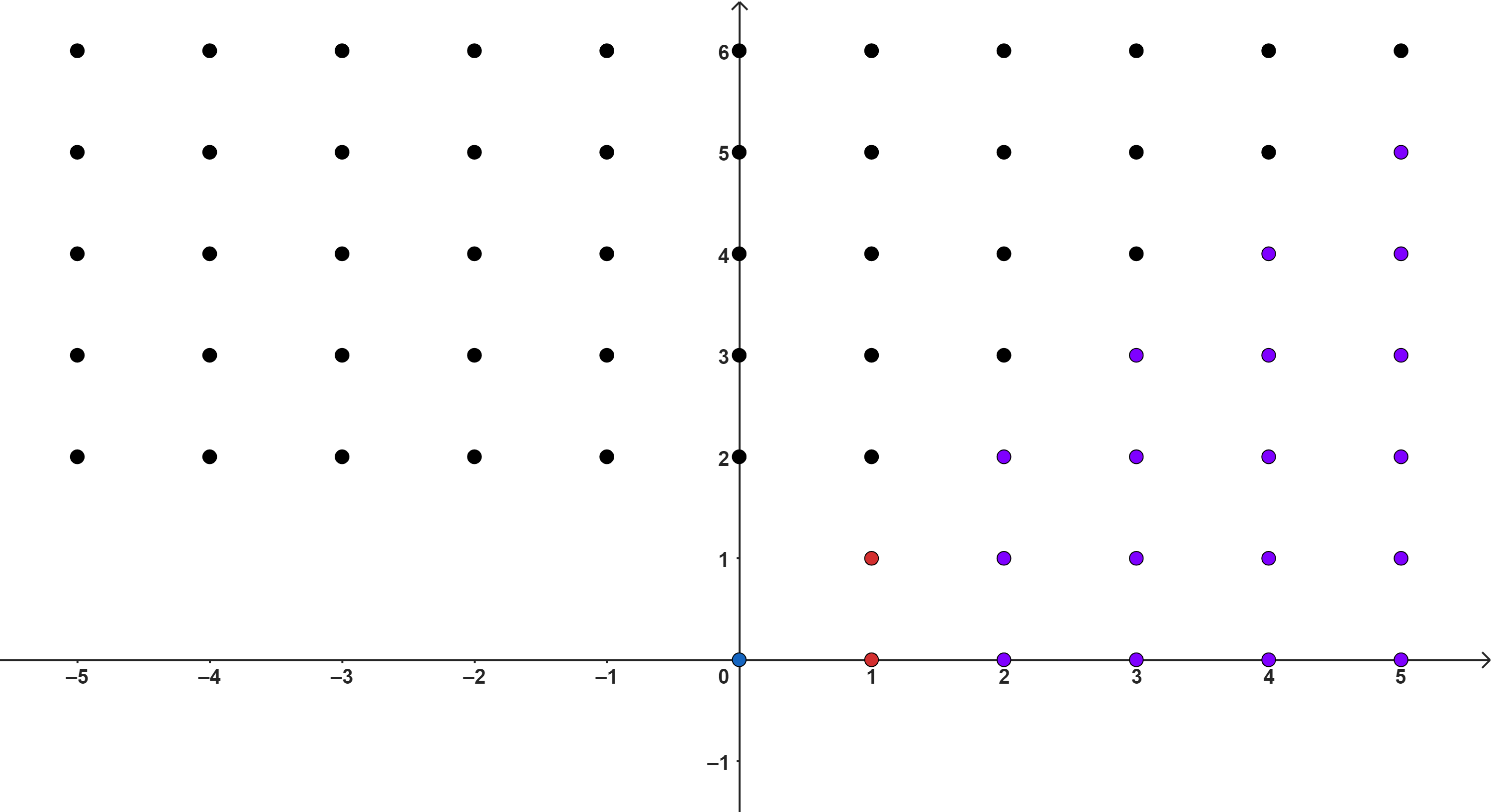}
		\caption{Almost atomic lattice monoid that is not nearly atomic. The atoms are displayed in red and the rest of the atomic elements in purple.}
		\label{fig:almost_no_nearly}
	\end{figure}
\end{example}

Here is an example of a quasi-atomic lattice monoid that is not almost atomic.

\begin{example} \label{ex:quasi-atomic not almost atomic}
	Consider the lattice monoid $M'=\langle(1,0),(0,2) \rangle$. Now set $M := M'\cup \left(  \mathbb{Z}  \times\mathbb{N}_{\ge 3} \right)$, and observe that $M$ is closed under addition so it is a lattice monoid. Figure~\ref{fig:quasi_no_almost} shows the monoid $M$. Also, note that $M$ is a reduced monoid. It is clear that $(1,0)$ and $(0,2)$ are both atoms. Indeed, $\mathcal{A}(M) = \{(1,0), (0,2)\}$, and this fact can be proved using an argument similar to that used in Example~\ref{ex:almost atomic not nearly atomic}.
	
	Let us show now that the monoid $M$ is quasi-atomic. Because $M' = \langle \mathcal{A}(M)\rangle$, it is enough to argue that each element of $M \setminus M' \subset  \mathbb{Z}  \times\mathbb{N}_{\ge 3} $ is a divisor of an element in $\langle \mathcal{A}(M) \rangle$. This is indeed the case, as for each $(x,y) \in M \setminus M'$, one immediately sees that $(-x,y) \in M$ and $(x,y)+(-x,y) =  (0,2y) \in \langle \mathcal{A}(M) \rangle$. Hence, $M$ is quasi-atomic.
	
	Finally, we shall verify that $M$ is not almost atomic. Since the second coordinate of each element in $\langle \mathcal{A}(M) \rangle$ is even, this amounts to observing that the second coordinate of $(0, 3) + (x,y)$ is odd for all $(x,y) \in \langle\mathcal{A}(M)\rangle$. Then we conclude that $M$ is a quasi-atomic lattice monoid that is not almost atomic.
\end{example}

		\begin{figure}[h]
			\includegraphics[width=10cm]{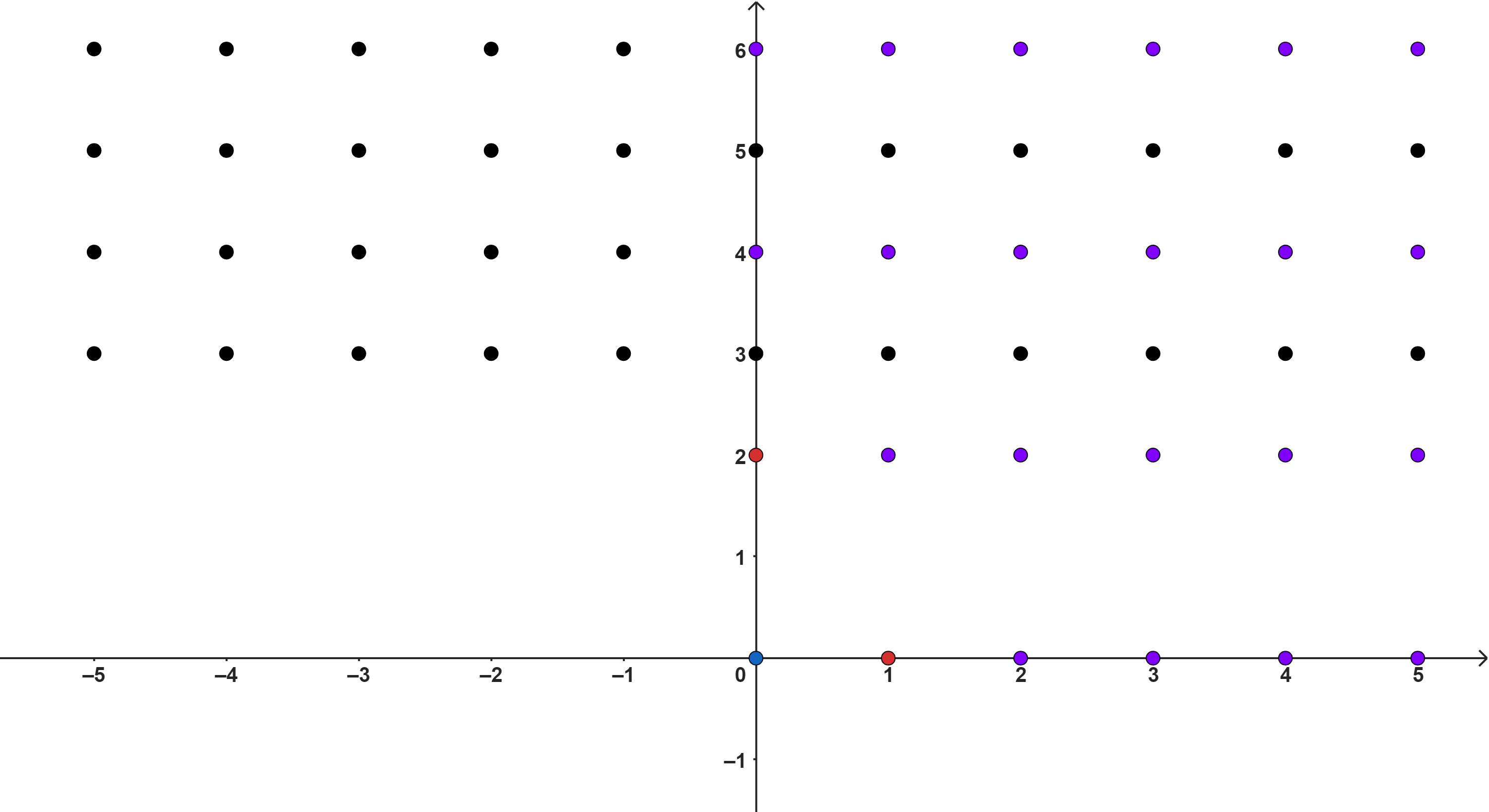}
			\caption{Quasi-atomic lattice monoid that is not almost atomic. The atoms are displayed in red and the rest of the atomic elements in purple.}
			\label{fig:quasi_no_almost}
		\end{figure}

 We conclude this section by presenting a characterization of quasi-atomic monoids, which is a generalization of \cite[Theorem $5.2$]{GP23}, which in turn is a generalization of \cite[Theorem $8$]{nL19}.

\begin{theorem}
	A monoid $M$ is quasi-atomic if and only if every nonzero prime ideal of~$M$ contains an atom.
\end{theorem}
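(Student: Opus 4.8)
The plan is to isolate a single divisor‑closed submonoid built from the quasi‑atomic elements and then identify its complement as a prime ideal. Concretely, I would set
\[
	Q := \{ b \in M \mid b + m \in \langle \mathcal{A}(M) \rangle \text{ for some } m \in M \}.
\]
Unwinding the definition $b \in \langle \mathcal{A}(M)\rangle - M$, a non-invertible element is quasi-atomic exactly when it lies in $Q$; moreover every unit $u$ lies in $Q$ because $u + (-u) = 0 \in \langle \mathcal{A}(M)\rangle$. Hence the theorem reduces to the clean reformulation: \emph{$M$ is quasi-atomic if and only if $Q = M$}. The whole proof then hinges on converting the global equality $Q = M$ into a condition that can be tested one prime ideal at a time.

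The key structural step I would establish first is that $M \setminus Q$ is a prime ideal whenever it is nonempty. This rests on two routine observations about $Q$: it is a submonoid (if $b_1 + m_1, b_2 + m_2 \in \langle \mathcal{A}(M)\rangle$ then $(b_1+b_2)+(m_1+m_2) \in \langle \mathcal{A}(M)\rangle$), and it is divisor-closed (if $b = c + d$ and $b + m \in \langle \mathcal{A}(M)\rangle$, then $c + (d+m) \in \langle \mathcal{A}(M)\rangle$). Read contrapositively, divisor-closedness says $(M\setminus Q) + M \subseteq M \setminus Q$, so $M \setminus Q$ is an ideal, and the submonoid property says $r + s \in M \setminus Q$ forces $r \in M \setminus Q$ or $s \in M \setminus Q$, so that ideal is prime. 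Since $0 \in \langle \mathcal{A}(M)\rangle \subseteq Q$, the ideal $M \setminus Q$ never contains $0$ and is therefore proper; if nonempty it contains a nonzero element and so is a nonzero prime ideal.

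With this in hand both directions are short. For the forward direction, assume $M$ is quasi-atomic and let $P$ be a nonzero prime ideal. I would first note that $P$ contains neither a unit nor $0$, since either would force $P = M$. Picking $b \in P$, it is non-invertible, so quasi-atomicity yields $m \in M$ with $b + m = a_1 + \cdots + a_k$ a sum of atoms; as $P$ is an ideal we have $b + m \in P$, and since $0 \notin P$ we get $k \ge 1$, whence primeness forces some atom $a_i \in P$. For the reverse direction, assume every nonzero prime ideal contains an atom and suppose $Q \neq M$; then $M \setminus Q$ is a nonzero prime ideal, so it contains an atom $a$, yet every atom lies in $\langle \mathcal{A}(M)\rangle \subseteq Q$, contradicting $a \in M \setminus Q$. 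Therefore $Q = M$ and $M$ is quasi-atomic.

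The genuinely load-bearing step — the one I would be most careful to get right — is the passage showing that the complement of $Q$ is a prime ideal avoiding $0$, since that is precisely what turns the monoid-wide condition $Q = M$ into something verifiable prime-ideal-by-prime-ideal. The remaining ingredients (the reformulation of quasi-atomicity via $Q$, and the observation that proper ideals contain no units and no $0$) are direct consequences of the definitions, so I expect no obstacle beyond correctly packaging the two closure properties of $Q$ into primeness of its complement.
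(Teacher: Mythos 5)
Your proof is correct and follows essentially the same route as the paper: the set $Q$ you introduce is exactly the complement of the paper's candidate prime ideal $P$ (the non-quasi-atomic elements), and both arguments hinge on showing that this complement is a nonzero prime ideal containing no atoms. The packaging via the divisor-closed submonoid $Q$ is only a cosmetic difference in presentation.
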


\begin{proof}
	For the direct implication, suppose first that $M$ is quasi-atomic. Fix a nonzero prime ideal $P$ of $M$ and take an element $r \in P$. Because $M$ is quasi-atomic, there exists an element $s \in M$ such that $r + s$ can be written as a sum of atoms. Now since $r + s \in P$, we have that at least one atom in a factorization of $r + s$ belongs to $P$, as desired.
	
	For the converse, suppose that $M$ is not a quasi-atomic monoid. Then the set $P$ consisting of all the (non-invertible and) non-quasi-atomic elements of $M$ is nonempty. Let us argue first that $P$ is an ideal of $M$. Assume, otherwise, that there exist $r \in P$ and $s \in M$ such that $r + s \notin P$. As $r \in P$, it cannot be invertible, and so $r+s$ is not invertible. This, along with the fact that $r+s \notin P$, guarantees that $r+s$ is a quasi-atomic element of $M$. Then there exists $m \in M$ such that $m + (r + s) \in \langle \mathcal{A}(M) \rangle $. However, this implies that $r$ is also quasi-atomic, contradicting that $r \in P$. Hence $P$ is an ideal of $M$. Moreover, $P$ is a prime ideal since it is proper and the sum of two quasi-atomic elements is always quasi-atomic. Thus, $P$ is a nonzero prime ideal of $M$ that does not contain any atoms, which concludes our proof. 
\end{proof}

\bigskip
\section{Furstenbergness}
\label{sec:Furstenbergness}

Recall that a monoid $M$ is Furstenberg provided that every non-invertible element of $M$ is divisible by an atom. As the properties studied in the previous section, being Furstenberg is a property weaker than being atomic. The following realizability result shows the existence of a variety of Furstenberg lattice monoids that are not atomic.

\begin{prop}
	For each $k \in \nn \cup \{\infty\}$, there exists a non-atomic Furstenberg lattice monoid~$M$ of $\zz^2$ such that $|\mathcal{A}(M)| = k$.
\end{prop}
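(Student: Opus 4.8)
The plan is to realize the finite values $k \in \nn$ and the value $k = \infty$ by genuinely different constructions, because Theorem~\ref{thm:Nearly_rationa_finite_atoms} forces them apart: a non-atomic rationally supported lattice monoid of rank $2$ has only finitely many atoms, so any monoid witnessing $k = \infty$ must be irrationally supported, whereas for finite $k$ a transparent rationally supported example is available. I would state the two cases as separate paragraphs of the proof.

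For $k \in \nn$, I would fix a numerical monoid $N \subseteq \nn_0$ with exactly $k$ atoms (such monoids exist for every positive integer $k$: take $N = \nn_0$ for $k = 1$ and $N = \langle k, k+1, \dots, 2k-1 \rangle$ for $k \ge 2$, where no generator is a sum of two generators since the smallest such sum is $2k > 2k-1$), and set
\[
	M := (N \times \{0\}) \cup (\zz \times \nn).
\]
First I would verify that $M$ is a reduced lattice monoid: it is closed under addition (two upper points sum to second coordinate $\ge 2$, an axis point plus an upper point stays upper, and $N$ is closed under addition), and its only unit is $(0,0)$. Writing $n_1 := \min \mathcal{A}(N)$, every upper point factors as $(x,y) = (n_1,0) + (x - n_1, y)$ into two non-units, so no upper point is an atom; and every factorization of an axis point keeps both summands on the axis, so $\mathcal{A}(M) = \mathcal{A}(N) \times \{0\}$ and $|\mathcal{A}(M)| = k$. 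The monoid is Furstenberg because every upper point is divisible by the atom $(n_1, 0)$ and every nonzero axis point is divisible by an atom of the (atomic) numerical monoid $N$; and it is not atomic because $\langle \mathcal{A}(M) \rangle = N \times \{0\}$ lies on the axis and so misses the non-unit $(0,1)$.

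For $k = \infty$ I would invoke the monoid $M$ already built in Example~\ref{ex:NA no AA}: it is a lattice monoid of $\zz^2$ that is not atomic and has the infinitely many atoms $\{a_n : n \in \nn_0\}$, so it only remains to check that it is Furstenberg. Every nonzero element of $M$ is a finite sum of generators from $A \cup \{a_n\} \cup \{d_n\} \cup \{w\}$. If such a sum uses a generator in $\{a_n\} \cup \{w\}$ (atoms) or a nonzero element of $A$ (each shown in the example to be atomic, hence a nonempty sum of atoms), then that element, and therefore the whole sum, is divisible by an atom. The only remaining elements are those whose expression uses some $d_n$; for such $x = d_n + y$ with $y \in M$, I would use that $d_n - d_{n+1}$ is a nonzero element of $A$, hence atomic, so some atom $\alpha$ satisfies $d_n - d_{n+1} - \alpha \in M$, giving $x - \alpha = (d_n - d_{n+1} - \alpha) + d_{n+1} + y \in M$ and thus $\alpha \mid_M x$. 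This does not contradict the non-atomicity argument of Example~\ref{ex:NA no AA}, which only forbids writing $d_m$ as a \emph{full} sum of atoms (any such sum would need an atom from $\cone(u,v)$), not divisibility by a single atom lying in the left cone $A \subseteq \cone(-v,u)$.

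The finite construction is routine bookkeeping; the crux is the infinite case, and specifically the observation that the monoid of Example~\ref{ex:NA no AA} is already Furstenberg. The tension there is that Furstenberg-ness and non-atomicity pull in opposite directions — one must divide each non-atomic witness $d_n$ by an atom without thereby writing $d_n$ as a sum of atoms — and the reconciliation I would emphasize is that the dividing atom is supplied by the left cone through $d_n - d_{n+1} \in A^\bullet$, whereas a complete sum-of-atoms decomposition would require an atom from the right cone $\cone(u,v)$.
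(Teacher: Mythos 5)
Your proposal is correct and takes essentially the same route as the paper: for finite $k$ you construct the same monoid $(N_k \times \{0\}) \cup (\zz \times \nn)$ with $N_k = \{0\} \cup \zz_{\ge k}$ (your $\langle k, k+1, \dots, 2k-1 \rangle$ is exactly that numerical monoid), and for $k = \infty$ you reuse the monoid of Example~\ref{ex:NA no AA} with the same key observation that $d_n - d_{n+1} \in A^\bullet$ is atomic and hence supplies the dividing atom. Your write-up of the infinite case is if anything slightly more careful than the paper's (handling arbitrary elements whose generating expression involves some $d_n$, not just the $d_n$ themselves), but the underlying argument is identical.
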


\begin{proof}
	Assume first that $k \in \nn$. Consider the numerical monoid $N_k := \{0\} \cup \zz_{\ge k}$. It is well known and easy to check that $\mathcal{A}(N_k) = \ldb k, 2k-1 \rdb$. Now take $M$ to be the lattice monoid $(N_k \times \{0\}) \cup (\zz \times \nn)$. Since $N_k \times \{0\}$ is a divisor-closed submonoid of $M$, it follows that $\ldb k, 2k-1 \rdb \times \{0\} \subseteq \mathcal{A}(M)$. Since every element of $M \setminus (N_k \times \{0\})$ is divisible by $(k,0)$, the equality $\mathcal{A}(M) = \ldb k, 2k-1 \rdb \times \{0\}$ actually holds. Hence, $|\mathcal{A}(M)| = k$. Since the second coordinate of each atom of $M$ is zero, it is clear that $M$ cannot be atomic. On the other hand, the atomicity of $N_k \times \{0\} \cong N_k$, along with the fact that the atom $(k,0)$ divides each element of $M \setminus (N_k \times \{0\})$, guarantees that $M$ is a Furstenberg monoid.
	\smallskip
	
	Let us now settle the case of $k = \infty$; that is, we should find a lattice monoid of $\zz^2$ with infinitely many atoms that is Furstenberg but not atomic. Consider the monoid~$M$ seen in Example \ref{ex:NA no AA} (Figure~\ref{fig:nearly_no_atomic_lattice_monoid}). We already proved that it is not atomic and contains infinitely many atoms. Let us prove that $M$ is a Furstenberg monoid. Since every element in $A \cup \{a_n \mid n \in \nn_0\}  \cup \{w\}$ is atomic, it only remains to show that every element in $\{d_n \mid n \in \nn_0 \}$ is divided by some atom. This follows immediately after observing that $d_{n} - d_{n+1} \in A$.
\end{proof}

\begin{prop}
	Let $M$ be a rationally supported lattice monoid of $\zz^2$. 
	Then $M$ is a Furstenberg monoid.
\end{prop}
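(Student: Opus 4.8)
The plan is to invoke Lemma~\ref{lemma1} first, reducing to the case where $M$ is a submonoid of $\zz \times \nn_0$ (so that the supporting line is $y=0$), and then to exploit a single structural feature: the second-coordinate map $(x,y) \mapsto y$ is additive and takes values in $\nn_0$. Thus whenever $d = e+f$ in $M$, the second coordinates of $e$ and $f$ add up to that of $d$ and neither is negative. This furnishes a genuine nonnegative ``size'' on the set of divisors of any element, and it is exactly what lets the argument succeed \emph{without} the ACCP, circumventing the fact that the first coordinate is typically unbounded. (One could alternatively peel off the case $|\mathcal{A}(M)| = \infty$ using Theorem~\ref{thm:Nearly_rationa_finite_atoms}, which forces atomicity and hence the Furstenberg property, and then treat only finitely many atoms; but the size argument below avoids this split entirely.)

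Fix a non-invertible $b \in M$. First I would record that every unit of $M$ lies on the line $y=0$: if $(x,y)$ and $(-x,-y)$ both belong to $M \subseteq \zz \times \nn_0$, then $y \ge 0$ and $-y \ge 0$, so $y=0$. Let $D_b$ be the set of non-invertible divisors of $b$ in $M$; it is nonempty since $b \in D_b$. Let $m_0$ be the least second coordinate occurring among the elements of $D_b$, which is well defined as the minimum of a nonempty subset of $\nn_0$.

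The principal (and easy) case is $m_0 \ge 1$. Here I would pick any $d \in D_b$ whose second coordinate equals $m_0$ and claim $d$ is an atom dividing $b$. Indeed, suppose $d = e+f$ with $e,f$ both non-invertible. Then $e \mid_M d \mid_M b$ and likewise $f \mid_M b$, so $e, f \in D_b$ and each has second coordinate at least $m_0$. But their second coordinates sum to that of $d$, namely $m_0$, forcing $m_0 \ge 2m_0$ and hence $m_0 \le 0$, a contradiction. Therefore every factorization of $d$ has an invertible factor, so $d$ is an atom, and $d \mid_M b$.

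It remains to treat $m_0 = 0$, which I expect to be the only delicate point. Here $b$ is divisible by a non-invertible element lying on the line $L_0 := M \cap (\zz \times \{0\})$. The step requiring care is to verify that a submonoid of $\zz$ containing a non-invertible element cannot meet both sides of the origin: if it contained a positive and a negative element it would, via a B\'ezout argument, be the subgroup $\delta\zz$ with $\delta$ the gcd of its elements, all of whose members are units. Hence $L_0$ is isomorphic to a submonoid of $\nn_0$ and is therefore atomic. Since $L_0$ is divisor-closed in $M$ (an $M$-divisor of a point on $y=0$ again has second coordinate $0$) and, by the same additivity of the second coordinate as above, every atom of $L_0$ remains an atom of $M$, writing the non-invertible line divisor of $b$ as a sum of atoms of $L_0$ exhibits an atom of $M$ dividing $b$. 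This completes the argument; the $m_0 \ge 1$ case is immediate from additivity of the second coordinate, and the bookkeeping for $m_0 = 0$ (the dichotomy for submonoids of $\zz$ and the transfer of atoms from $L_0$ to $M$) is the sole point needing attention.
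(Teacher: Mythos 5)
Your proof is correct and follows essentially the same route as the paper's: reduce to a submonoid of $\zz \times \nn_0$ via Lemma~\ref{lemma1}, use the second coordinate as a nonnegative additive weight, and treat the horizontal part $M \cap (\zz \times \{0\})$ as an atomic divisor-closed submonoid (a submonoid of $\nn_0$ up to sign) whose atoms remain atoms of $M$. The only difference is organizational: you argue directly by selecting, for each $b$, a non-invertible divisor of minimal height, whereas the paper splits globally on whether $M \cap (\zz \times \{0\}) \subseteq \uu(M)$ (proving the stronger BFM property in that case) and otherwise runs a minimal-counterexample contradiction on the same weight.
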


\begin{proof}
	By virtue of Lemma~\ref{lemma1}, it suffices to prove that every additive submonoid of $\zz \times \nn_0$ is a Furstenberg monoid. Let $M$ be an additive submonoid of $\zz \times \nn_0$.
	
	We first argue that if $M \cap (\zz \times \{0\}) \subseteq \uu(M)$ (that is, $M \cap (\zz \times \{0\}) = \uu(M)$), then~$M$ is a BFM. To do so, suppose that $M \cap (\zz \times \{0\}) \subseteq \uu(M)$. If $M \subseteq \zz \times \{0\}$, then $M$ is a group, and so it is a BFM. Therefore, we can assume, without loss of generality, that $M$ is not a subset of $\zz \times \{0\}$, which implies that $M$ is not a group. Now let $u = (0,1)$. To argue that $M$ is a BFM, take a non-invertible element $b \in M$, and write $b = a_1 + \dots + a_\ell$ for some non-invertible elements $a_1, \dots, a_\ell \in M$. For each $i \in \ldb 1, \ell \rdb$, the fact that $a_i \notin \mathcal{U}(M)$ implies that $\norm{\textsf{p}_u(a_i)} \ge 1$, and so
	\[
		\ell \le \sum_{i=1}^\ell \norm{\mathsf{p}_u(a_i)} = \norm{\mathsf{p}_u \bigg( \sum_{i=1}^\ell a_i \bigg)} = \norm{\mathsf{p}_u(b)}.
	\]
	As a consequence, after assuming that we have chosen $\ell$ as large as it can possibly be, the maximality of $\ell$ will guarantee that the elements $a_1, \dots, a_\ell$ are atoms of $M$. Therefore, we obtain that $b$ is an atomic element such that $\max \mathsf{L}(b) \le \norm{\mathsf{p}_u(b)}$. Thus, we conclude that $M$ is a BFM.
	
	Finally, suppose, for the sake of a contradiction, that $M$ is not a Furstenberg monoid. Observe that $N := M \cap (\zz \times \{0\})$ is not a subset of $\uu(M)$ as, otherwise, $M$ would be a BFM by virtue of the argument given in the previous paragraph, contradicting that $M$ is not a Furstenberg monoid. Therefore, there is an element of $N$ that is not invertible, which means that $N$ is not a group. Since $N$ is isomorphic to a submonoid of $\zz$, and every submonoid of $\zz$ is either a group or a numerical monoid (up to isomorphism), we can further assume that either $N \subseteq \nn_0 \times \{0\}$ or $N \subseteq -\nn_0 \times \{0\}$. After replacing $M$ by its isomorphic copy $\{(-x,y) \mid (x,y) \in M\}$ if necessary, we can actually assume that $N \subseteq \nn_0 \times \{0\}$. Let $u$ be as in the previous paragraph, and among all the nonzero elements of $M$ that are not divisible by any atoms, let $v$ be one such that $\norm{\mathsf{p}_u(v)}$ is minimum. Since $v$ is not an atom, it can be written as $v = v_1 + v_2$ for some nonzero $v_1, v_2 \in M$ such that neither $v_1$ nor $v_2$ is divisible by an atom in $M$. Since $N$ is an atomic divisor-closed submonoid of $M$, it follows that every nonzero element of $N$ is divisible by an atom in $M$. As a result, neither $v_1$ nor $v_2$ are elements of $N$, which implies that $\norm{\mathsf{p}_u(v_1)} > 0$ and $\norm{\mathsf{p}_u(v_2)} > 0$. Thus, $v_1$ is an element in $M$ that is not divisible by any atoms and satisfies that $\norm{\mathsf{p}_u(v_1)} < \norm{\mathsf{p}_u(v)}$, which contradicts the minimality of $v$.
\end{proof}

Finally, we construct a lattice monoid of $\zz^2$ which is not a group and contains no atoms, and so it is not a Furstenberg monoid. Following the terminology introduced in~\cite{CDM99} by Coykendall, Dobbs, and Mullins (in the context of integral domains), we say that a monoid is \emph{antimatter} if its set of atoms is empty. It is clear, for instance, that the additive monoid $\qq_{\ge 0}$ is an antimatter monoid. Integral domains that are antimatter also exist, and they were first studied in~\cite{CDM99}. Furthermore, various class of antimatter monoid domains were identified in~\cite{ACHZ07}. We conclude this paper exhibiting an example of an antimatter lattice monoid of $\zz^2$.

\begin{example}
	Let $L$ be a line through $0$ in $\rr^2$ such that $L$ has negative irrational slope, and let $u$ be the unique unit vector in $\rr_{\ge 0} \times \rr_{\ge 0}$ that is orthogonal to $L$. For instance, we can take $L = \rr(-\frac{\sqrt{2}}2,\frac{\sqrt{2}}2)$ and $u = (\frac{\sqrt{2}}2, \frac{\sqrt{2}}2)$. Now consider the lattice monoid $M$ defined as follows:
	\[
		M := \{v \in \zz^2 \mid \langle v,u \rangle \ge 0 \}.
	\]
	We claim that $M$ contains no atoms. To argue this, fix $w \in M$. Lemma~\ref{lem:lines and lattice points} guarantees the existence of $v \in \zz^2 \setminus \{(0,0)\}$ such that $d(v, L) < d(w, L)$. We can assume, without loss of generality, that $v \in M$ (otherwise, replacing $v$ by $-v$). Therefore, we see that
	\[
		\langle w-v, u \rangle = \norm{u} \bigg( \frac{\langle w,u \rangle}{\norm{u}} - \frac{\langle v,u \rangle}{\norm{u}} \bigg) = \norm{u} \big( d(w,L) - d(v,L) \big) > 0.
	\]
	This implies that $w - v \in M$, and then we can decompose $w$ in $M$ as $w = v + (w - v)$. As the only point with rational coordinates in the line $L$ is $(0,0)$, the monoid $M$ must be reduced, and so the decomposition $w = v + (w - v)$ ensures that $w \notin \mathcal{A}(M)$. As a consequence, the monoid $M$ is antimatter, as desired.
\end{example}

\bigskip
\section*{Acknowledgments}

The authors are grateful to their mentor, Dr. Felix Gotti, for proposing the project and main questions motivating this paper and also for his guidance throughout the preparation of the same.

\bigskip

\end{document}